\pgfplotsset{compat=newest}
\tikzset{vertex/.style={circle, style=draw, inner sep=0pt, minimum size=12pt}}
\newcommand{\pushright}[1]{\ifmeasuring@#1\else\omit\hfill$\displaystyle#1$\fi\ignorespaces}
\newcommand{\SageMath}{{\tt SageMath}}%
\newcommand{\FindStat}[1]{%
  \ifx&#1&%
  \url{www.findstat.org}
  \else \url{www.findstat.org/#1}
  \fi}%
\newcommand{\OEIS}[1]{\url{www.oeis.org/#1}}
\newtheorem{theorem}{Theorem}
\newtheorem{proposition}[theorem]{Proposition}
\newtheorem{corollary}[theorem]{Corollary}
\newtheorem{lemma}[theorem]{Lemma}
\theoremstyle{definition}
\newtheorem{example}[theorem]{Example}
\newcommand{\Dfn}[1]{\emph{\color{blue}#1}} 
\DeclareMathOperator{\cNhd}{\bar{N}}
\DeclareMathOperator{\Nhd}{N}
\DeclareMathOperator{\fix}{fix}
\newcommand{\sibling}{s}
\newcommand{\tuft}{t}
\newcommand{\EInProofOfThm}{\mathcal Y}
\newcommand{\comatinggraph}{co-mating graph\xspace}
\newcommand{\comatinggraphs}{co-mating graphs\xspace}
\newcommand{\comating}{\mathcal M}
\newcommand{\Tuft}{\mathcal T}
\newcommand{\Patch}{\mathcal P}
\newcommand{\decoratedgraphs}{$3$-sort graphs\xspace}
\newcommand{\decorated}{\mathcal G_{\!\neq 2}}
\newcommand{\decoratedgeneric}{\mathcal H_{\neq 2}}
\newcommand{\connected}{{}^{\mathrm{c}}}
\newcommand{\Par}{\mathrm{Par}}
\newcommand{\Graphs}{\mathcal G}
\newcommand{\Sets}{\mathcal E}
\newcommand{\dec}{{\operatorname{dec}}}
\title[An unexpected symmetry]{Unexpectedly, a symmetry on\\ unlabeled graphs}
\author[Fürnsinn]{Florian Fürnsinn}
\author[Gangl]{Moritz Gangl}
\address[Fürnsinn, Gangl]{Fakult\"{a}t f\"{u}r Mathematik, Universität Wien, Vienna, Austria}
\email{florian.fuernsinn@univie.ac.at}
\email{moritz.gangl@univie.ac.at}
\author[Rubey]{Martin Rubey}
\address[Rubey]{Fakult\"{a}t f\"{u}r Mathematik und Geoinformation, TU Wien, Vienna, Austria}
\email{martin.rubey@tuwien.ac.at}
\begin{document}
\begin{abstract}
  We exhibit the joint symmetric distribution of the following two
  parameters on the set of unlabeled, simple, connected graphs
  with~$n$ vertices.  The first parameter is the maximal number of
  leaves attached to a vertex.  The second parameter is the size of
  the largest set of vertices sharing the same closed neighborhood
  minus~$1$.

  Apparently, this is the first example of a natural, non-trivial
  equidistribution of graph parameters on unlabeled connected graphs
  on a fixed set of vertices.

  Our proof is enumerative, using the theory of species.  Exhibiting
  an explicit bijection interchanging the two parameters remains an
  open problem.
\end{abstract}
\maketitle

\section{Introduction}

To the best of our
knowledge\footnote{\url{https://mathoverflow.net/questions/313532}},
there is not a single known \lq natural\rq\ bijection on the set of
all isomorphism types of (simple) connected graphs on a given finite
set of vertices.  Slightly enlarging the domain to all graphs we are
aware of taking the complement of a graph.  Alternatively, restricting the domain to
three-connected planar graphs and removing the constraint that the number of vertices is preserved, we can take the planar dual.  Of
course, it is possible to modify these maps to obtain involutions
preserving connectivity, but such a modification does not seem to be a very natural thing to do.

Similarly\footnote{\url{https://mathoverflow.net/questions/312823}},
we do not know of any pair of \lq natural\rq\ equidistributed
parameters on graphs appearing in the literature.  This may be surprising, since many graph parameters have been considered and studied, albeit mostly in terms of inequalities satisfied among them.  We do, in fact, have some empirical evidence for the rarity of equidistributions: the \FindStat{} database, as of~2025, lists about
300~such statistics, without any conjectural equidistributions.  Note that any equidistribution in turn implies the existence of an involution that interchanges the two parameters.

Thus, our aim is to present a pair of non-trivial graph parameters
which have the same distribution on the set of isomorphism types of
simple connected graphs with a fixed number of vertices.

We assume that all graphs are finite and simple.  Unless stated otherwise, we also assume that graphs are connected.  Given such a graph $G$ and a vertex $v$, the
\Dfn{neighborhood} of $v$ is the set
$\Nhd(v)\coloneqq\{u\in V(G)\mid u\text{ adjacent to } v\}$.  Furthermore,
let $$\cNhd(v)\coloneqq\{v\} \cup \Nhd(v)$$ be the \Dfn{closed neighborhood}
of $v$.  Two vertices are \Dfn{siblings} if they share the same
closed neighborhood.  Finally, a \Dfn{leaf} or \Dfn{endpoint} is a
vertex of degree one. We consider the following two parameters on the set of all (simple,
finite) graphs: the \Dfn{tuft
  number}\footnote{\FindStat{St001826}} $\tuft(G)$ is the maximal
number of leaves adjacent to the same vertex. The \Dfn{sibling
  number},
\[
  \sibling(G) %
  \coloneqq \max_{v\in V(G)}\left\lvert\{u\in V(G)\mid\cNhd(u)=\cNhd(v)\}\right\rvert
  - 1
\] is the maximal number of other vertices sharing the
same closed neighborhood with any given vertex.
\Cref{fig:Graphs4} displays all graphs on four vertices, together with their sibling and tuft numbers.
\begin{figure}
    \centering
    \begin{tikzpicture}
        \node[circle, style=draw] (V1) at (0,0) {};
        \node[circle, style=draw] (V2) at (1,0) {};
        \node[circle, style=draw] (V3) at (1,1) {};
        \node[circle, style=draw] (V4) at (0,1) {};
        \node (A) at (.5, -1) {$\sibling(G)=0$};
        \node (B) at (.5, -1.5) {$\tuft(G)=3$};
        \draw (V2)--(V1)--(V3);
        \draw (V1)--(V4);
    \end{tikzpicture}
    \begin{tikzpicture}
        \node[circle, style=draw] (V1) at (0,0) {};
        \node[circle, style=draw] (V2) at (1,0) {};
        \node[circle, style=draw] (V3) at (1,1) {};
        \node[circle, style=draw] (V4) at (0,1) {};
        \node (A) at (.5, -1) {$\sibling(G)=0$};
        \node (B) at (.5, -1.5) {$\tuft(G)=1$};
        \draw (V4)--(V1)--(V2)--(V3);
    \end{tikzpicture}
    \begin{tikzpicture}
        \node[circle, style=draw] (V1) at (0,0) {};
        \node[circle, style=draw] (V2) at (1,0) {};
        \node[circle, style=draw] (V3) at (1,1) {};
        \node[circle, style=draw] (V4) at (0,1) {};
        \node (A) at (.5, -1) {$\sibling(G)=0$};
        \node (B) at (.5, -1.5) {$\tuft(G)=0$};
        \draw (V1)--(V2)--(V3)--(V4)--(V1);
    \end{tikzpicture}
    \begin{tikzpicture}
        \node[circle, style=draw] (V1) at (0,0) {};
        \node[circle, style=draw] (V2) at (1,0) {};
        \node[circle, style=draw] (V3) at (1,1) {};
        \node[circle, style=draw] (V4) at (0,1) {};
        \node (A) at (.5, -1) {$\sibling(G)=1$};
        \node (B) at (.5, -1.5) {$\tuft(G)=1$};
        \draw (V1)--(V2)--(V3)--(V1)--(V4);
    \end{tikzpicture}
    \begin{tikzpicture}
        \node[circle, style=draw] (V1) at (0,0) {};
        \node[circle, style=draw] (V2) at (1,0) {};
        \node[circle, style=draw] (V3) at (1,1) {};
        \node[circle, style=draw] (V4) at (0,1) {};
        \node (A) at (.5, -1) {$\sibling(G)=1$};
        \node (B) at (.5, -1.5) {$\tuft(G)=0$};
        \draw (V1)--(V2)--(V3)--(V1)--(V4)--(V3);
    \end{tikzpicture}
    \begin{tikzpicture}
        \node[circle, style=draw] (V1) at (0,0) {};
        \node[circle, style=draw] (V2) at (1,0) {};
        \node[circle, style=draw] (V3) at (1,1) {};
        \node[circle, style=draw] (V4) at (0,1) {};
        \node (A) at (.5, -1) {$\sibling(G)=3$};
        \node (B) at (.5, -1.5) {$\tuft(G)=0$};
        \draw (V1)--(V2)--(V3)--(V1)--(V4)--(V3);
        \draw (V2)--(V4);
    \end{tikzpicture}
    \caption{Sibling and tuft numbers of graphs on $4$ vertices.}
    \label{fig:Graphs4}
\end{figure}
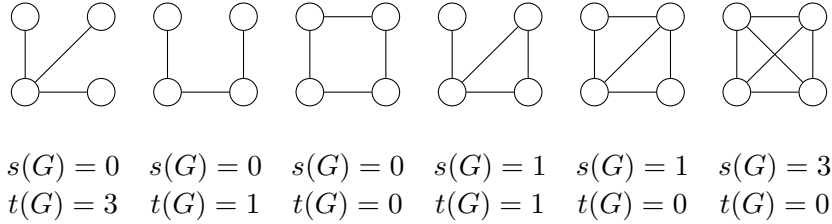

The following theorem, initially conjectured by Per Alexandersson and
the third author, achieves our goal of exhibiting a non-trivial
equidistribution among graph parameters.  It generalizes an earlier
result by Kilibarda~\cite{MR2300242}, which shows that the number of
unlabeled leafless graphs coincides with the number of unlabeled
graphs without siblings, assuming a fixed number of vertices.
\begin{theorem}\label{thm:main}
  The sibling number and the tuft number have joint symmetric
  distribution on the set of unlabeled connected graphs with~$n$
  vertices.  Put differently, the generating series
  $\sum_{G} x^{\sibling(G)} y^{\tuft(G)}$ is symmetric in~$x$ and
  $y$, where the sum is taken over all unlabeled connected graphs
  with~$n$ vertices.
\end{theorem}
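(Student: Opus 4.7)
The approach is species-theoretic, aiming to exhibit a generating function identity that makes the claimed symmetry manifest.

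The starting point is the observation that both parameters measure ``local repetition'' at a vertex, but of two incompatible kinds: a sibling class at $v$ is a clique of vertices sharing the closed neighborhood of $v$, while a tuft at $v$ is an independent set of leaves all having $v$ as their sole neighbor. These cannot coexist at a single vertex, because a leaf adjacent to a vertex $v$ in a sibling class of size at least $2$ would be forced, by the sibling property, to be adjacent to every sibling of $v$ and hence have degree $\geq 2$. My plan is therefore to decompose each connected graph $G$ canonically into a \emph{core} $C$ with $\sibling(C)=0$ and $\tuft(C)=0$ (that is, leafless and without siblings) together with, at each vertex of $C$, either a \emph{patch} — the sibling clique of the vertex, of some size $k \geq 1$ — or a \emph{tuft} — a set of $j \geq 0$ attached leaves — subject to the constraint that $k \geq 2$ forces $j=0$.

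The pay-off of such a decomposition is that, restricting to graphs with $\sibling(G) \leq a$ and $\tuft(G) \leq b$, the ordinary generating function for the decoration at a single core vertex, tracked by the number of graph vertices it contributes, is
\[
  f(x;a,b) \;=\; x \;+\; (x^2 + x^3 + \cdots + x^{a+1}) \;+\; (x^2 + x^3 + \cdots + x^{b+1}),
\]
whose three summands correspond to a bare core vertex, a non-trivial patch, and a non-trivial tuft, respectively. This expression is visibly symmetric in $a$ and $b$. Writing $Z_{\mathcal{C}^\circ}$ for the cycle index series of the species of cores, Pólya enumeration then gives
\[
  \sum_{n \geq 0} N_n(a,b)\, x^n
  \;=\; Z_{\mathcal{C}^\circ}\!\bigl(f(x;a,b),\, f(x^2;a,b),\, f(x^3;a,b),\, \dots\bigr),
\]
where $N_n(a,b)$ counts unlabeled connected $n$-vertex graphs with $\sibling(G) \leq a$ and $\tuft(G) \leq b$. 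Since every argument is symmetric in $(a,b)$, so is $N_n(a,b)$, and a double application of Möbius inversion in the two parameters then yields $\#\{G : \sibling(G)=a,\tuft(G)=b\} = \#\{G : \sibling(G)=b,\tuft(G)=a\}$ on $n$ vertices, which is the content of the theorem.

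The main obstacle is making the structural decomposition genuinely bijective. Stripping all leaves from $G$ can create sibling relations that were not present originally — for instance, the path $P_4$ reduces to $K_2$ once its two leaves are removed, and the inner vertices thereby become siblings — while identifying siblings can in turn alter the set of leaves, so the reduction cannot be carried out naively. Furthermore, the edge $K_2$ is simultaneously a sibling pair of size $2$ and a pair of tuft-leaves, admitting two candidate decompositions that must be reconciled. The paper's ``$3$-sort graphs,'' \comatinggraphs, and the ``$\neq 2$'' decoration visible in the macros all appear tailored to exactly these difficulties, distinguishing core, patch-center, and tuft-center vertices as separate sorts within a multi-sort species framework, with the two-vertex case treated as a base exception. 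Once that framework is in place, the remainder of the argument should follow from the symmetry of $f(x;a,b)$ by essentially formal species manipulations.
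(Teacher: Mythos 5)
Your overall strategy --- a species/P\'olya decomposition over a leafless, sibling-free base, a per-vertex contribution that is visibly symmetric in the two bounds, and a final double M\"obius inversion --- is the right shape, and the inversion step at the end is exactly what the paper does. But the decomposition you propose does not exist as stated, and the obstacle you flag in your last paragraph is not a technicality to be deferred: resolving it changes both the base of the composition and the inner generating function. Concretely, your core is the fully reduced graph and your decoration at each core vertex is \emph{one} sibling clique or \emph{one} tuft. Because reduction is iterative (removing leaves creates new siblings, contracting siblings creates new leaves), almost no graph is of this form: $P_4$ reduces to a single vertex but is not a vertex with a single tuft or clique attached, and the first graph of \cref{fig:Graphs6} (six vertices, one sibling pair \emph{and} one tuft of size two, reduction a single vertex) cannot be written as one core vertex carrying one decoration. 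So the identity $\sum_n N_n(a,b)x^n = Z_{\mathcal C^\circ}(f(x;a,b),\dots)$ undercounts badly, and your $f(x;a,b)$ is not the correct inner series even in the unrestricted limit (it tends to $x+2x^2+2x^3+\cdots$, whereas the correct inner series tends to $x+x^2+x^3+\cdots$).

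The paper avoids this by composing over the \comatinggraph (one round of leaf removal followed by sibling contraction --- a graph with no siblings but possibly \emph{with} leaves), not over the reduction, and by letting the structure at each vertex be a \emph{patch} that may contain several tufts together with a sibling group (\cref{thm:patch-decomposition}). A second, independent difficulty you do not address is that bounding the tuft number cannot be done by bounding the leaves of each tuft inside a patch, because leaves of the whole graph may sit at sort-$X$ roots of patches; the paper handles this with the substitution $Y\mapsto\Omega(\EInProofOfThm_\tuft)$ involving the combinatorial logarithm (\cref{lem:decoratedgeneric}). Only after both steps does one get $\decorated^{\sibling,\tuft}=\comating\circ\mathcal Q^{\sibling,\tuft}+\cdots$ with a \emph{virtual} inner species whose type generating series $\frac{(1-x^{\sibling+2})(1-x^{\tuft+2})}{1-x}-1$ is the symmetric quantity --- not your $f$. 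In short: the plan's skeleton matches the paper, but the two ideas that make it work (composition over the co-mating graph with multi-tuft patches, and the $\Omega$-substitution for the tuft bound) are missing, and without them the argument does not go through.
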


\begin{figure}[h]
            \centering
            \begin{tikzpicture}
                \node[circle, style=draw] (V1) at (0,0) {};
                \node[circle, style=draw] (V2) at (-0.5,-0.866) {};
                \node[circle, style=draw] (V3) at (0.5,-0.866) {};
                \node[circle, style=draw] (V4) at (0,-1.732) {};
                \node[circle, style=draw] (V5) at (-0.5,-2.598) {};
                \node[circle, style=draw] (V6) at (0.5,-2.598) {};
                \draw (V1)--(V2)--(V4)--(V5);
                \draw (V1)--(V3)--(V4)--(V6);
                \draw (V2)--(V3);
                \node (A) at (0, -3.598) {$\sibling(G)=1$};
                \node (B) at (0, -4.098) {$\tuft(G)=2$};
            \end{tikzpicture}
            \hspace{1cm}
            \begin{tikzpicture}
                \node[circle, style=draw] (V1) at (-0.5,0) {};
                \node[circle, style=draw] (V2) at (0.5,0) {};
                \node[circle, style=draw] (V3) at (0,-0.866) {};
                \node[circle, style=draw] (V4) at (0,-1.732) {};
                \node[circle, style=draw] (V5) at (-0.5,-2.598) {};
                \node[circle, style=draw] (V6) at (0.5,-2.598) {};
                \draw (V3)--(V1)--(V2)--(V3)--(V4)--(V5);
                \draw (V4)--(V6);
                \node (A) at (0, -3.598) {$\sibling(G)=1$};
                \node (B) at (0, -4.098) {$\tuft(G)=2$};
            \end{tikzpicture}
            \hspace{2cm}
            \begin{tikzpicture}
                \node[circle, style=draw] (V1) at (0,0) {};
                \node[circle, style=draw] (V2) at (0,-0.866) {};
                \node[circle, style=draw] (V3) at (-0.5,-1.732) {};
                \node[circle, style=draw] (V4) at (0.5,-1.732) {};
                \node[circle, style=draw] (V5) at (0,-2.598) {};
                \node[circle, style=draw] (V6) at (0,-3.464) {};
                \draw (V1)--(V2)--(V3)--(V4)--(V2)--(V5);
                \draw (V5)--(V3)--(V1)--(V4)--(V5)--(V6);
                \node (A) at (0, -4.464) {$\sibling(G)=2$};
                \node (B) at (0, -4.964) {$\tuft(G)=1$};
            \end{tikzpicture}
            \hspace{1cm}
            \begin{tikzpicture}
                \node[circle, style=draw] (V1) at (0,0) {};
                \node[circle, style=draw] (V2) at (0.5,-0.866) {};
                \node[circle, style=draw] (V3) at (-0.5,-0.866) {};
                \node[circle, style=draw] (V4) at (0,-1.732) {};
                \node[circle, style=draw] (V5) at (0,-2.598) {};
                \node[circle, style=draw] (V6) at (0,-3.464) {};
                \draw (V1)--(V2)--(V3)--(V1)--(V4)--(V5)--(V6);
                \draw (V2)--(V4)--(V3);
                \node (A) at (0, -4.464) {$\sibling(G)=2$};
                \node (B) at (0, -4.964) {$\tuft(G)=1$};
            \end{tikzpicture}
            \caption{The two graphs on~$6$ vertices with sibling number~$1$
              and tuft number~$2$, and the two graphs on~$6$ vertices with sibling number~$2$ and tuft number~$1$.}
            \label{fig:Graphs6}
\end{figure}
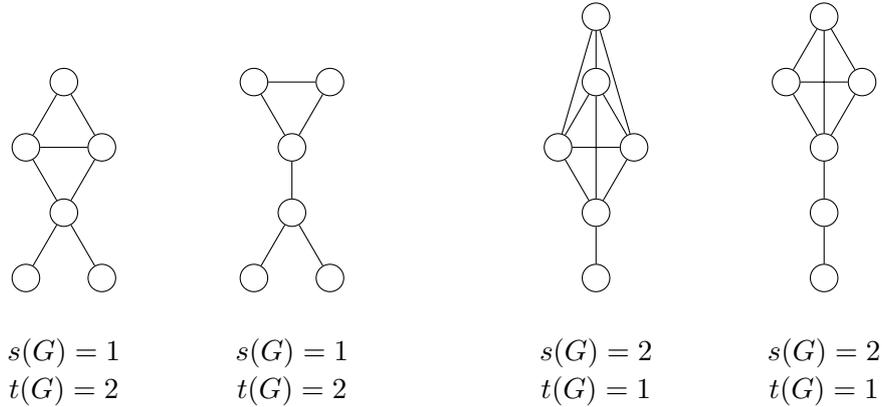

As an example, the graphs on six vertices with sibling number~$1$ and
tuft number~$2$ and those with sibling number~$2$ and tuft
number~$1$ are displayed in \Cref{fig:Graphs6}.

Note that, among all graphs on $n>2$ vertices, the graph with maximal
sibling number is the complete graph $K_n$, which has tuft
number~$0$ and sibling number $n-1$.  On the other hand, the unique
graph maximizing the tuft number is the star graph $S_n$, which
has $n-1$ leaves adjacent to a single vertex, and no siblings. 

To prove \cref{thm:main}, we follow the ideas of Gessel and
Li~\cite{MR2739506}, who used the language of combinatorial species,
as introduced by Joyal~\cite{MR0927763,MR0633783}, and further
developed by Bergeron, Labelle and Leroux~\cite{MR1629341}, to provide
an alternative proof of Kilibarda's result.  More precisely, we
express the species of vertex-labeled graphs with sibling number at
most~$\sibling$ and tuft number at most~$\tuft$ as a
composition of the species of graphs without siblings and a virtual
species.  It then turns out that this virtual species has an
isomorphism type generating series which is obviously symmetric
in~$\sibling$ and~$\tuft$.

Moreover, we will also prove the following refinement of \cref{thm:main}.  Let the
\Dfn{reduction} of a graph -- other than $K_2$ -- be the graph
obtained by repeatedly removing all leaves and contracting each group
of siblings to a single vertex.  Trivially, the reduction is a
leafless graph without siblings.  \Cref{fig:example reducing}
provides an example of the reduction process.
\begin{figure}[ht]
\begin{tikzpicture}
     \node[vertex] (V10) at (-0.525,0.05) {};
     \node[vertex] (V11) at (0.05,-0.525) {};
     \node[vertex] (V12) at (0.26,0.26) {};

      \node[vertex] (V21) at (1.5,0) {};
      \node[vertex] (V22) at (0,1.5) {};

      \node[vertex] (V3) at (1,-1) {};
      \node[vertex] (V4) at (-1,1) {};

      \node[vertex] (V5) at (-1,-1) {};
      \node[vertex] (V51) at (-1,-1.8) {};
      \node[vertex] (V52) at (-1.8,-1) {};

      \node[vertex] (V6) at (2,2) {};

      \node[vertex] (V7) at (0,-2) {};
      \node[vertex] (V71) at (0,-2.8) {};
      \node[vertex] (V72) at (-0.6,-2.8) {};
      \node[vertex] (V73) at (0.6,-2.8) {};

      \node[vertex] (V81) at (-2.4,1.6) {};
      \node[vertex] (V82) at (-1.6,2.4) {};

      \draw (V10)--(V11)--(V12)--(V10) (V10)--(V21)--(V3) (V10)--(V22)--(V3) (V4)--(V5)--(V10) (V11)--(V21) (V21)--(V22) (V11)--(V22) (V5)--(V11) (V12)--(V21) (V12)--(V22) (V5)--(V12) (V3)--(V5) (V21)--(V4) (V21)--(V6) (V22)--(V4) (V22)--(V6) (V3)--(V7) (V4)--(V81) (V4)--(V82) (V81)--(V82) (V7)--(V5) (V51)--(V5)--(V52) (V71)--(V7)--(V72) (V73)--(V7);
  \end{tikzpicture}
     \begin{tikzpicture}
     \node[vertex] (V1) at (0,0) {};
      \node[vertex] (V2) at (1,1) {};
      \node[vertex] (V3) at (1,-1) {};
      \node[vertex] (V4) at (-1,1) {};
      \node[vertex] (V5) at (-1,-1) {};

      \node[vertex] (V6) at (2,2) {};
      \node[vertex] (V7) at (0,-2) {};
      \node[vertex] (V8) at (-2,2) {};

      \draw (V1)--(V2)--(V3) (V4)--(V5)--(V1) (V3)--(V5) (V2)--(V4) (V2)--(V6) (V3)--(V7) (V4)--(V8) (V7)--(V5);
  \end{tikzpicture}
  \begin{tikzpicture}
     \node[vertex] (V1) at (0,0) {};
      \node[vertex] (V2) at (1,1) {};
      \node[vertex] (V3) at (1,-1) {};
      \node[vertex] (V4) at (-1,1) {};
      \node[vertex] (V5) at (-1,-1) {};
      \node[vertex] (V7) at (0,-2) {};

      \draw (V1)--(V2)--(V3) (V4)--(V5)--(V1) (V3)--(V5) (V2)--(V4) (V3)--(V7) (V7)--(V5);
  \end{tikzpicture}
    \caption{The graph on the left is reduced in two steps to the leafless graph without siblings on the right.}
    \centering
    \label{fig:example reducing}
    \end{figure}
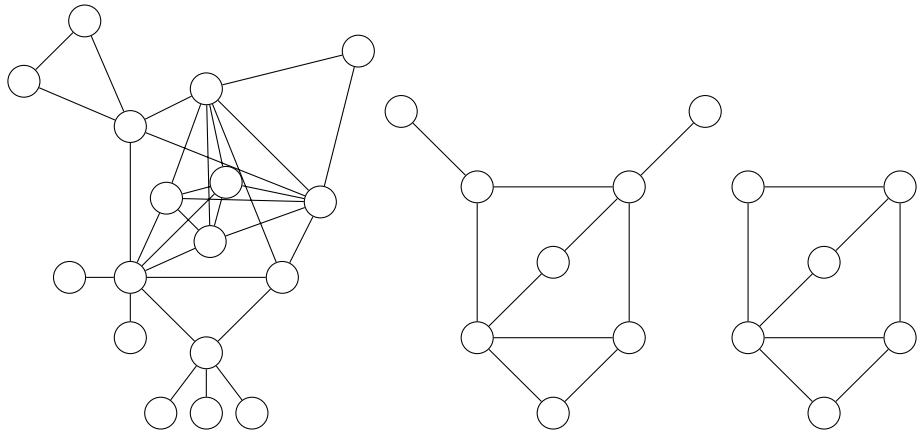

\begin{theorem}\label{thm:R}
  Let $R$ be a graph without siblings and without leaves.  The
  sibling number and the tuft number have joint symmetric distribution on
  the set of unlabeled connected graphs with $n$ vertices that
  reduce to~$R$.
\end{theorem}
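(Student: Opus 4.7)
The plan is to refine the species-theoretic argument used to prove \Cref{thm:main}. Recall that the species of vertex-labeled connected graphs with $\sibling \leq s$ and $\tuft \leq t$ is decomposed as a plethysm whose outer factor is essentially the species of leafless connected graphs without siblings, and whose inner factor is a virtual species whose isomorphism-type generating series is manifestly symmetric in $s$ and $t$.

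The key refinement is to observe that this decomposition is compatible with the reduction map: the outer skeleton selected by the composition for a composite structure agrees, up to isomorphism, with the reduction of the associated graph. Granting this, the unlabeled graphs reducing to a fixed leafless, sibling-free $R$ are enumerated by the plethysm $Z_R[F_{s,t}]$, where $Z_R$ is the cycle index of the natural $\operatorname{Aut}(R)$-action on $V(R)$ and $F_{s,t}$ is the cycle index series of the inner virtual species. Since $Z_R$ is independent of $(s,t)$ and $F_{s,t}$ is symmetric in $(s,t)$, plethystic substitution yields a generating series in $(s,t)$ that is again symmetric. A mixed second finite difference in $s$ and $t$ then converts cumulative counts into the joint distribution of exact values; since this linear operator commutes with the transposition $s \leftrightarrow t$, symmetry is preserved, and \Cref{thm:R} follows.

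The main obstacle is establishing the compatibility between the plethystic decomposition and the reduction map. Since reduction is iterative — new leaves and sibling groups may arise after each contraction and leaf removal — the inner virtual species must be defined so as to absorb \emph{all} such data in a single composition, rather than by nested compositions corresponding to individual reduction steps. One must also verify that a single patch species suffices uniformly at every vertex of $R$, so that the decomposition matches the purely local action of the reduction map, and that the boundary case $R = K_2$ (which the definition of reduction explicitly excludes) is reinstated and enumerated separately.
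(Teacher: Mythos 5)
Your overall strategy---refine the plethystic decomposition so that it tracks the reduction, then exploit the symmetry of the inner factor---is the right one, but you misidentify the outer factor of the decomposition, and as a consequence the one genuinely new ingredient is left unproved. In the decomposition underlying \cref{thm:main}, the outer species is $\comating$, the species of co-mating graphs, i.e.\ connected graphs without siblings which may very well still have leaves; it is \emph{not} the species of leafless sibling-free graphs. The skeleton $M_G$ selected by the composition is obtained from $G$ by a \emph{single} round of leaf removal and sibling contraction, so in general it is not the reduction of $G$. Your proposed refinement $Z_R[F_{s,t}]$, with $Z_R$ the cycle index of $\operatorname{Aut}(R)$ acting on $V(R)$, would therefore undercount: attaching one patch to each vertex of $R$ produces only those graphs whose reduction terminates after one round, whereas, as you yourself observe, new leaves and new sibling groups appear at every stage. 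You name this as ``the main obstacle'' and propose to fix it by fattening the inner species ``to absorb all such data in a single composition,'' but no such species is constructed, and constructing one directly is not easier than the problem you started with.

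The paper resolves this differently: it keeps the inner species $\mathcal Q^{\sibling,\tuft}$ unchanged and instead restricts the \emph{outer} species to $\comating^R$, the co-mating graphs whose reduction is $R$. The step that makes this legitimate is the confluence result of \cref{thm:confluence} and \cref{cor:confluence}: removing leaves and contracting siblings in essentially any order yields the same reduction, so $G$ and its co-mating graph $M_G$ reduce to the same $R$, and the decomposition of \cref{thm:patch-decomposition} restricts to give \cref{thm:patch-decomposition-siblings-refined}. Symmetry then follows exactly as for \cref{thm:main}, since $\widetilde{\mathcal Q^{\sibling,\tuft}}$ is symmetric in $\sibling$ and $\tuft$ while the outer factor $\comating^R$ depends on neither. (Your route could in principle be repaired a posteriori via \cref{prop:comating}, which writes $\comating^R=\mathcal R\circ\Patch(X,-X,0)^{(-1)}$ and hence, by associativity of composition, exhibits an inner species of the kind you want---but that identity is itself a consequence of the refined decomposition, so it cannot serve as its foundation.) Finally, note that the exceptional case to be treated separately is $R=\bullet$, the single-vertex graph, not $R=K_2$: the latter has siblings and so never occurs as a reduction.
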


\medskip

{\bf Outline.} In \cref{sec:species} we recall the main definitions
and some results from the theory of combinatorial species, mostly to
fix notation.  In \Cref{exam: comating graph species,exam: 3-labeled
  graphs,exam: flowers,exam: cultivations} we introduce the species
which are fundamental to the decompositions we provide in the
following sections.  A brief overview of the history of the problem,
as far as it serves as starting point for our contribution, can be
found in \cref{sec:patch decomp}.  The main result of this section is
a generalization of Li and Gessel's decomposition, taking into
account the sibling number.  \cref{sec:symmetry} is devoted to the
proof of \cref{thm:main}, based on this decomposition.  In the
following two sections we refine the decomposition to obtain
\cref{thm:R}.  First, in \cref{sec:reduction}, we show that removing
leaves and contracting siblings in essentially any order will produce
the reduction of the graph.  Then, in \cref{sec:symmetry-refined} we
deduce \cref{thm:R}.  In \cref{sec:graph-classes} we show that
induced cycles other than triangles are preserved by the reduction
process.  In particular, this implies that the sibling and the tuft
number also have joint symmetric distribution on the set of
unlabeled chordal graphs with $n$ vertices.  In
\cref{sec:Description of Comating} we provide a somewhat more
explicit description of the species occurring in the decompositions
used to prove our main theorems.  We also provide the first few terms
of their molecular decomposition.  Finally, in \cref{Section Towards
  a Bijection} we collect some observations that may help finding a
bijective proof of \cref{thm:R}.

\medskip

\textbf{Acknowledgments.}  We are grateful to Mainak Roy, who
provided a generic implementation of the molecular decomposition of
species, and to Travis Scrimshaw, who laid the foundations for lazy
computations with graded algebras in \SageMath.

The first named author was funded by the Austrian Science Fund (FWF), grant \href{https://doi.org/10.55776/P34765}{10.55776/P34765} and a DOC Fellowship (27150) of the Austrian Academy of Sciences. Moreover, he was supported by the French–Austrian project EAGLES (ANR-22-CE91-0007 \& FWF grant \href{https://doi.org/10.55776/I6130}{10.55776/I6130}).

The second author acknowledges the financial support from FWF, grant \href{https://doi.org/10.55776/P34931}{10.55776/P34931}.

\section{Combinatorial Species}
\label{sec:species}
Combinatorial species were introduced by Joyal as a formal framework
for enumeration~\cite{MR0927763,MR0633783}.  In this section, apart
from briefly recalling the definitions, we also introduce the species
appearing subsequently.  For any details, we recommend Joyal's
original articles and the textbook by Bergeron, Labelle and
Leroux~\cite{MR1629341}.

A \Dfn{combinatorial species} is a functor from the category of
finite sets with bijections to itself. Given a (finite) set of
\Dfn{labels} $U$, a species $F$ produces a (finite) set of
\Dfn{$F$-structures} $F[U]$.  Moreover, given a bijection
$\sigma: U\to V$ it produces a bijection $F[\sigma]: F[U]\to F[V]$,
called the \Dfn{transport} or \Dfn{relabeling} of $F$-structures
along $\sigma$.  Often, the precise definition of transport for a
particular species is obvious from context and therefore omitted.  We
regard two species $F$ and $G$ as \Dfn{equal} if there is a natural
isomorphism between $F$ and $G$ in the categorical sense.

The symmetric group $\mathfrak S_n$ acts on the set
$F[n] \coloneq F[\{1, 2, \dots, n\}]$ by transport of structures.
The $\mathfrak S_n$-orbits under this action are called \Dfn{isomorphism types of $F[n]$} or
\Dfn{unlabeled $F$-structures} of order (or size) $n$.  Accordingly,
two $F$-structures $a\in F[U]$ and $b\in F[V]$ are \Dfn{isomorphic}
if there is a bijection $\sigma:U\to V$ with $F[\sigma](a) = b$.

The \Dfn{(exponential) generating series} and the \Dfn{(isomorphism)
  type generating series} associated with a species $F$ are
\begin{align*}
  F(x) &\coloneq \sum_{n\geq 0} \lvert F[n]\rvert \frac{x^n}{n!}\quad\text{and}\\
  \widetilde F(x) &\coloneqq \sum_{n\geq 0} \#\text{isomorphism types of $F[n]$}\; x^n,\\
\intertext{respectively.  Both are specializations of the \Dfn{cycle index series}}
  Z_F &\coloneqq \sum_{n\geq 0}\frac{1}{n!}%
  \sum_{\sigma\in\mathfrak S_n}\fix F[\sigma] p_1^{\sigma_1} p_2^{\sigma_2} \dots,
\end{align*}
where $\fix F[\sigma]$ denotes the number of fixed points of
$F[\sigma]$, $p_k$ is the $k$-th power sum symmetric function, and
$\sigma_k$ is the number of $k$-cycles of $\sigma$.
\begin{proposition}
  The exponential generating series of a species is obtained from its
  cycle index series by specializing $p_1=x$ and $p_k=0$ for $k>1$.

  The isomorphism type generating series of a species is obtained
  from its cycle index series by specializing $p_k=x^k$ for all
  $k\geq 1$.
\end{proposition}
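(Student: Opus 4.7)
The plan is to handle the two specializations separately; each reduces to inspecting the defining sum
$$Z_F = \sum_{n\geq 0}\frac{1}{n!}\sum_{\sigma \in \mathfrak S_n} \fix F[\sigma]\, p_1^{\sigma_1}p_2^{\sigma_2}\cdots.$$

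For the first claim, I would substitute $p_1=x$ and $p_k=0$ for $k>1$ and observe that the only monomials that survive are those where $\sigma_k=0$ for every $k\geq 2$, i.e., where $\sigma$ is the identity permutation of $\{1,\dots,n\}$. For the identity, the transport $F[\sigma]$ is itself the identity map on $F[n]$, so $\fix F[\sigma]=|F[n]|$, and the surviving monomial is $p_1^n=x^n$. Summing over $n$ recovers $F(x)$.

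For the second claim, substituting $p_k=x^k$ collapses every monomial: $p_1^{\sigma_1}p_2^{\sigma_2}\cdots$ becomes $x^{\sigma_1+2\sigma_2+\cdots}=x^n$, since the cycle lengths of $\sigma\in\mathfrak S_n$ sum to~$n$. The specialization then reads
$$Z_F\big|_{p_k=x^k}=\sum_{n\geq 0}\frac{x^n}{n!}\sum_{\sigma\in\mathfrak S_n}\fix F[\sigma].$$
At this point the only non-routine ingredient enters: the Cauchy–Frobenius (Burnside) lemma applied to the action of $\mathfrak S_n$ on $F[n]$ by transport of structures identifies the inner sum divided by $n!$ with the number of orbits, and these orbits are by definition the isomorphism types of $F[n]$. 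Matching with the definition of $\widetilde F(x)$ finishes the argument.

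There is no genuine obstacle: both claims are direct verifications against the definition of $Z_F$. The single place where a named theorem is invoked is the orbit-counting step in the second part, which I would cite rather than reprove from scratch.
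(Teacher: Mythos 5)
Your argument is correct: the first specialization kills every permutation with a cycle of length at least two, leaving only the identity contribution $\lvert F[n]\rvert\,x^n/n!$, and the second specialization reduces to the Cauchy--Frobenius orbit count for the transport action of $\mathfrak S_n$ on $F[n]$, whose orbits are by definition the isomorphism types. The paper states this proposition without proof, deferring to the standard species references, and your verification is exactly the standard one found there, so there is nothing to compare beyond noting that it checks out.
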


A species $F$ is a \Dfn{subspecies} of a species $G$, if, for any set
of labels, every $F$-structure is also a $G$-structure, and the
relabeling bijections coincide.  We denote the \Dfn{restriction of a
  species}~$F$ to sets of cardinality~$m$ by $F_m$.  Similarly, the
restrictions to sets of cardinality at least $m$, less than~$n$ and
sets of cardinalities from~$m$ to~$n-1$ are denoted by
$F_{\geq m}$, $F_{<n}$ and $F_{m\leq\bullet <n}$, respectively.
Perhaps the most fundamental species is the \Dfn{species of sets}
$\Sets$, which produces a single structure for any set of
labels.  The species $\Sets_0$ is usually denoted by $1$, and the
species $\Sets_1$ is usually denoted by $X$ and referred to as the
\Dfn{singleton species}.  We have $\Sets(x) = \exp x$,
$\widetilde{\Sets}(x) = \frac{1}{1-x}$ and
\[
  Z_{\Sets}=\exp\left(\sum_{k>0}\frac{p_k}{k}\right).
\]
The zero species, producing no structure for any set of labels, is denoted~$0$.
\begin{example}\label{exam: comating graph species}
A \Dfn{\comatinggraph} is a connected graph without siblings.
We define $\comating[U]$ as the set of \comatinggraphs with vertex
set $U$.  For a given co-mating graph $M\in\comating[U]$ and a bijection
$\sigma: U\to V$, the graph $\comating[\sigma](M)$ is obtained from
$M$ by replacing the vertex labeled $u$ with a vertex labeled
$\sigma(u)$.  Thus, $\comating$ is a subspecies of the species
$\Graphs\connected$ of vertex labeled connected graphs.

Graphs without siblings were already studied as \lq point distinguishing
graphs\rq\ by Sumner~\cite{zbMATH03417503}. Their complements are
known as \lq point determining graphs\rq\ or \lq mating graphs\rq. The term mating graph was established by Bull and Pease in~\cite{bullpease1989} and they were enumerated by Read in~\cite{read1989enumeration}.
\end{example}

A \Dfn{multisort species of $k$ sorts} is a functor from the
category of $k$-tuples of finite sets (short: \lq $k$-sets\rq) with
bijections preserving the sorts to the category of finite sets with
bijections.  We denote with $X$, $Y$ and $Z$ the singleton species of
three different sorts.  That is, given a $3$-set $U=(U_1, U_2, U_3)$,
the set $X[U]$ contains a single structure if $U_1$ has one element
and $U_2$ and $U_3$ are empty.  Otherwise, $X[U]$ is empty.

In the following definitions, the addition of $k$-sets always refers
to their component-wise disjoint union.

\begin{example}\label{exam: 3-labeled graphs}
  Given a $3$-set $U=(U_1, U_2, U_3)$, let $\decorated[U]$ be the set
  of connected \Dfn{\decoratedgraphs} with vertex set
  $U_1 + U_2 + U_3$, whose leaves have sort~$Y$, whose vertices which
  have siblings have sort~$Z$, and whose other vertices have
  sort~$X$.

  For any unlabeled connected graph $G$ other than $K_2$, the
  complete graph on two vertices, there is exactly one unlabeled
  $\decorated(X, Y, Z)$-structure.  Indeed, $K_2$ is the unique
  connected graph that has a leaf which is simultaneously a sibling.
  Also, in any graph other than $K_2$, a leaf cannot be adjacent to a
  vertex having siblings.

  The isomorphism types of $\decorated(X, Y, Z)$ with at most three
  vertices and their corresponding $\mathcal{G}\connected$-structures
  are depicted in \Cref{fig:Decorations}.

  Note that $\decorated(X, X, 0)$ is the species of co-mating graphs
  $\comating$, whereas $\decorated(X, 0, 0)$ is the species of
  connected graphs having neither siblings nor leaves.
\end{example}

\begin{figure}[h]
  \[
  \begin{array}{c|c|c|c|c|c}
    & 0 & 1 & 2 & 3 & \cdots \\\hline
    \begin{tikzpicture}[baseline={(current bounding box.center)}]
      \node () at (0,0) {$\Graphs\connected$};
    \end{tikzpicture} & 
    \begin{tikzpicture}[baseline={(current bounding box.center)}]
      \node () at (0,0) {$\emptyset$};
    \end{tikzpicture} & 
    \begin{tikzpicture}[baseline={(current bounding box.center)}]
      \node[circle, style=draw] () at (0,0) {};
    \end{tikzpicture} & 
    \begin{tikzpicture}[baseline={(current bounding box.center)}]
      \node[circle, style=draw] (V1) at (0,0) {};
      \node[circle, style=draw] (V2) at (1,0) {};
      \draw (V1) -- (V2);
    \end{tikzpicture} & 
    \begin{tikzpicture}[baseline={(current bounding box.center)}]
      \node () at (0.866,-.5) {\phantom{$Y$}};
      \node () at (0.866,.5) {\phantom{$Y$}};
      \node () at (0,0) {\phantom{$X$}};
      \node[circle, style=draw] (V1) at (0.866,-.5) {};
      \node[circle, style=draw] (V3) at (0.866,.5) {};
      \node[circle, style=draw] (V2) at (0,0) {};
      \draw (V1) -- (V2) -- (V3);
    \end{tikzpicture}
    \begin{tikzpicture}[baseline={(current bounding box.center)}]
      \node () at (0.866,-.5) {\phantom{$Z$}};
      \node () at (0.866,.5) {\phantom{$Z$}};
      \node () at (0,0) {\phantom{$Z$}};
      \node[circle, style=draw] (V1) at (0.866,-.5) {};
      \node[circle, style=draw] (V3) at (0.866,.5) {};
      \node[circle, style=draw] (V2) at (0,0) {};
      \draw (V1) -- (V2) -- (V3) -- (V1);
    \end{tikzpicture} & 
    \cdots \\\hline
    \begin{tikzpicture}[baseline={(current bounding box.center)}]
      \node () at (0,0) {$\decorated(X, Y, Z)$};
    \end{tikzpicture} & 
    \begin{tikzpicture}[baseline={(current bounding box.center)}]
      \node () at (0,0) {$\emptyset$};
    \end{tikzpicture} & 
    \begin{tikzpicture}[baseline={(current bounding box.center)}]
      \node () at (0,0) {$X$};
    \end{tikzpicture} & 
    \begin{tikzpicture}[baseline={(current bounding box.center)}]
      \node () at (0,0) {$\emptyset$};
    \end{tikzpicture} & 
    \begin{tikzpicture}[baseline={(current bounding box.center)}]
      \node (V1) at (0.866,-.5) {$Y$};
      \node (V3) at (0.866,.5) {$Y$};
      \node (V2) at (0,0) {$X$};
      \draw (V1) -- (V2) -- (V3);
    \end{tikzpicture}
    \begin{tikzpicture}[baseline={(current bounding box.center)}]
      \node (V1) at (0.866,-.5) {$Z$};
      \node (V3) at (0.866,.5) {$Z$};
      \node (V2) at (0,0) {$Z$};
      \draw (V1) -- (V2) -- (V3) -- (V1);
    \end{tikzpicture} & 
    \cdots
  \end{array}
  \]
  \caption{The first few isomorphism types of the species of connected graphs
    $\Graphs\connected$ and the species of \decoratedgraphs
    $\decorated(X, Y, Z)$.}
  \label{fig:Decorations}
\end{figure}

We will make use of the sum, the product and the composition of two
species.  The set of structures of the \Dfn{sum} of two species $F$
and $G$ is the disjoint union of the set of $F$-structures and the
set of $G$-structures, \[(F+G)[U] \coloneqq F[U]+G[U].\]  The set of structures
of the \Dfn{product} of $F$ and $G$ is
\[
  (F\cdot G)[U] \coloneqq \sum_{U = V + W} F[V] \times G[W],
\]
where the sum is over all ordered set partitions of $U$ into two
(possibly empty) parts.  Finally, let $F$ be a $1$-sort species and
let $G$ be a $k$-sort species whose restriction to the empty set is
zero.  Then the set of structures on the $k$-set $U$ of the
\Dfn{composition} of $F$ and $G$ is
\[
  (F\circ G)[U] \coloneqq \sum_{\pi\in\Par[U]} F[\pi]\times\prod_{B\in\pi} G[B],
\]
where the sum is over all set partitions of $U$ into non-empty
blocks.  In other words, a structure in $\Par[(U_1,\dots,U_k)]$ is a
set of $k$-tuples, each of which has at least one non-empty
component, whose component-wise union is $U$.

\begin{example} \label{ex: productofsets}
    Let $F, G$ be species. Then $\Sets(F)\cdot\Sets(G)=\Sets(F+G)$.
\end{example}

\begin{example}\label{exam: flowers}
    A \Dfn{tuft} is a structure consisting of a designated \Dfn{root} of
sort $X$ and a non-empty set of leaves of sort $Y$.  Thus, the species
of tufts equals
\[
  \Tuft(X, Y) \coloneqq X\Sets_{\geq 1}(Y)
\]

\begin{figure}[h]
    \centering
    \begin{tikzpicture}[line width=.4mm]
    \node at (0.7,0)
    {\includegraphics[width=0.6\linewidth]{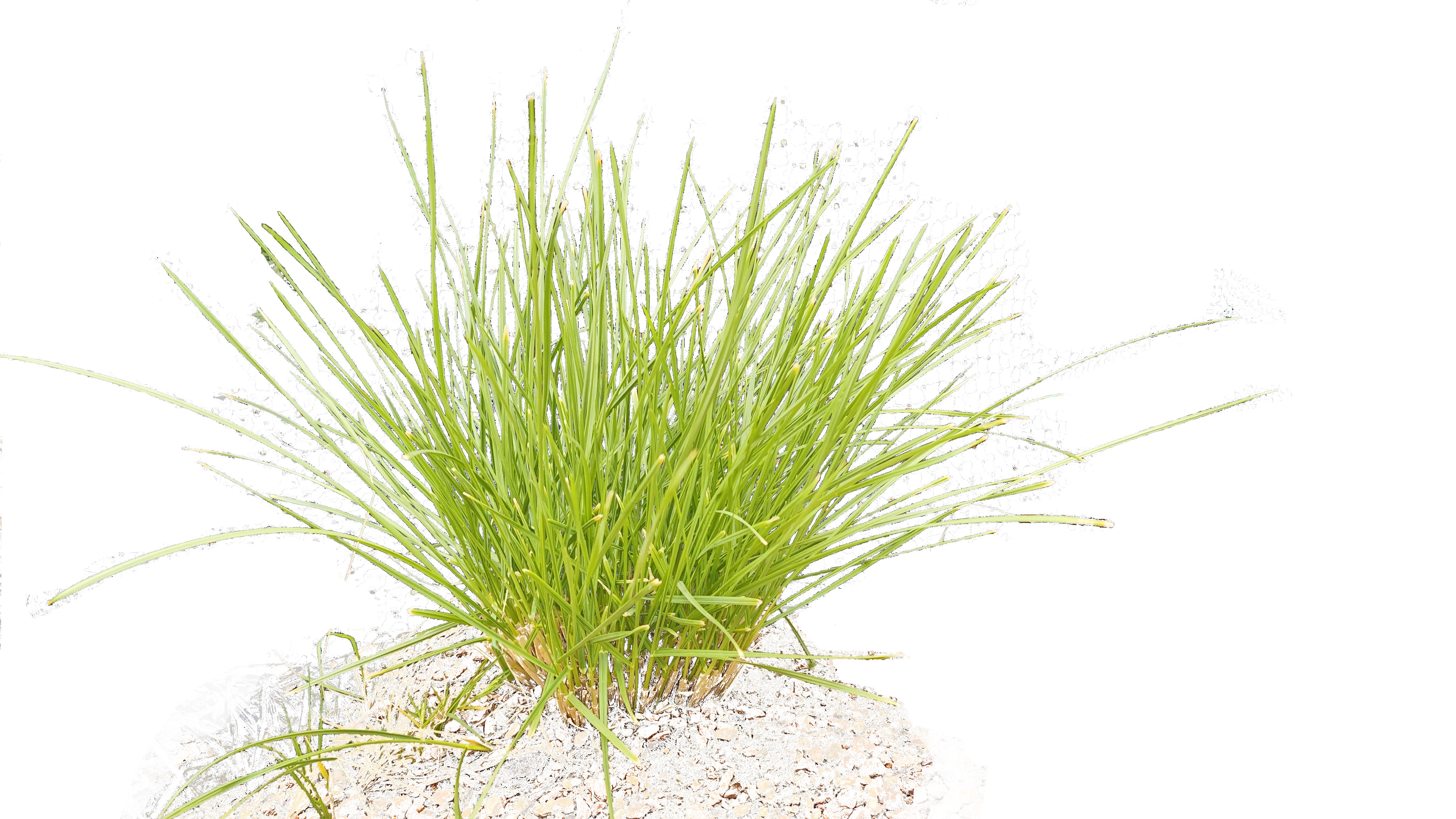}};
    \node (X) at (0,-1.7) {\Large $X$};
    \node (Y1) at (0,1.4) {\Large $Y$};
    \node (Y2) at (-1.8,.3) {\Large $Y$};
    \node (Y3) at (-1,1.2) {\Large $Y$};
    \node (Y4) at (1,1.3) {\Large $Y$};
    \node (Y5) at (1.8,.4) {\Large $Y$};
    \node (Y6) at (2,-.2) {\Large $Y$};
    \draw (Y1) -- (X) -- (Y2);
    \draw (Y3) -- (X) -- (Y4);
    \draw (Y5) -- (X) -- (Y6);
\end{tikzpicture}
    \caption{A tuft}
    \label{fig:tuft}
\end{figure}
\end{example}
\begin{example}\label{exam: cultivations}
A \Dfn{patch} is a non-empty structure consisting of a
possibly empty set of tufts together with a possibly empty set of
additional leafless \Dfn{roots}, subject to the following rule: if there
is precisely one leafless root, it is of sort $X$, otherwise the leafless roots
are of sort $Z$. In particular, the roots of a patch are exactly the vertices of sort $X$ and $Z$.  Thus, the species of patches equals
\[
  \Patch(X, Y, Z) %
  \coloneqq \left(1 + X + \Sets_{\geq 2}\left(Z\right)\right) %
  \cdot(\Sets\circ\Tuft) - 1.
\]
\end{example}

The following proposition summarizes how the operations on species
interact with passing to the different types of generating series.
\begin{proposition}\label{prop:series}
  Let $F$ and $G$ be species.  Then,
  \begin{align*}
    (F+G)(x) &= F(x) + G(x), & 
    (F\cdot G)(x) &= F(x) G(x), \\
    \widetilde{(F+G)}(x) &= \widetilde F(x) + \widetilde G(x), & 
    \widetilde{(F\cdot G)}(x) &= \widetilde F(x)\widetilde G(x), \\
    Z_{F+G} &= Z_F + Z_G & 
    Z_{F\cdot G} &= Z_F Z_G.
  \end{align*}
  Moreover, $(F\circ G)(x) = F(G(x))$ and $Z_{F\circ G}$ is the
  plethysm of $Z_F$ and $Z_G$.  In particular,
  $\widetilde{(F\circ G)}(x)$ is obtained from $Z_F$ by substituting
  $\widetilde G(x^k)$ for $p_k$.
\end{proposition}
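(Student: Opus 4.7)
The plan is to establish the three cycle-index identities (for $+$, $\cdot$, $\circ$) and then deduce the exponential and type series identities by the specializations $p_1=x$, $p_{k>1}=0$ (for $F(x)$) and $p_k=x^k$ (for $\widetilde F(x)$) recorded in the preceding proposition. The sum identity is immediate from the definitional equality $(F+G)[\sigma] = F[\sigma]\sqcup G[\sigma]$, which adds fixed-point counts. For the product, I fix $\sigma\in\mathfrak S_n$ and observe that a fixed point of $(F\cdot G)[\sigma]$ consists of an ordered partition $[n]=V+W$ in which each of $V$ and $W$ is a union of cycles of $\sigma$, together with fixed points of $F[\sigma|_V]$ and $G[\sigma|_W]$. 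Summing over all ways to distribute the cycles of $\sigma$ between $V$ and $W$ factors the monomial $\prod_k p_k^{\sigma_k}$, and then summing over $\sigma$ yields the product $Z_F\cdot Z_G$.

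The composition case is the bulk of the work. A $(F\circ G)$-structure on $[n]$ is a set partition $\pi$, an $F$-structure on $\pi$, and a $G$-structure on each block; $\sigma$ fixes it iff $\sigma$ preserves $\pi$ (permuting its blocks), the induced permutation $\bar\sigma$ of $\pi$ fixes the $F$-structure, and the $G$-structure on each block is transported to the one on its image. I decompose $\pi$ into $\bar\sigma$-orbits. For an orbit of length $\ell$ with representative $B$, the $G$-structures on the other blocks of the orbit are forced by the one on $B$, and the remaining condition is that $\sigma^{\ell}|_B$ fixes the $G$-structure on $B$. A cycle of $\sigma$ contained in the union of blocks of this orbit must have length divisible by $\ell$, since the block containing $a$ maps to the block containing $\sigma(a)$ under $\bar\sigma$; if this cycle length is $k\ell$, then $\sigma^\ell|_B$ contains a corresponding cycle of length $k$. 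Collecting contributions, an orbit of $\ell$ blocks contributes $Z_G$ with $p_k\mapsto p_{k\ell}$, and assembling these over all orbit configurations (together with the $F$-fixed points on $\pi$) is exactly the plethystic expansion of $Z_F \circ Z_G$.

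The remaining identities follow by specialization. Setting $p_1=x$, $p_{k>1}=0$ retains only the identity permutation, which gives both $(F\cdot G)(x)=F(x)G(x)$ and $(F\circ G)(x)=F(G(x))$. Setting $p_k=x^k$, the plethystic substitution $p_k\mapsto p_{k\ell}$ becomes $p_k\mapsto x^{k\ell}$, turning $Z_G$ into $\widetilde G(x^\ell)$, so $\widetilde{(F\circ G)}(x)$ is obtained from $Z_F$ by substituting $\widetilde G(x^k)$ for $p_k$. The main obstacle is the combinatorial bookkeeping in the composition case: one must verify that cycles of $\sigma$ inside a $\bar\sigma$-orbit of length $\ell$ have length divisible by $\ell$, track how their $\ell$-th powers recover the cycle type of $\sigma^\ell|_B$, and recognize the resulting sum as the plethysm. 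Once this identification is made, the generating series identities are all routine specializations.
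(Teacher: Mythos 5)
Your proof is correct and follows the standard argument: establish the cycle-index identities for sum, product and composition by counting fixed points of the transport (with the plethysm arising from the $\bar\sigma$-orbit analysis on blocks), then obtain the exponential and type generating series by the specializations $p_1=x,\ p_{k>1}=0$ and $p_k=x^k$. The paper itself states this proposition without proof, as standard background from the theory of species (Bergeron--Labelle--Leroux), and your argument is exactly the textbook one.
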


A species $F$ is \Dfn{molecular} if any two $F$-structures are
isomorphic.  The zero species $0$, producing no structure for any set
of labels, is trivially molecular.
\begin{example}
Given a graph~$R$, define a species $\mathcal R$ such that the structures $\mathcal R[U]$ are the graphs isomorphic to~$R$ with vertex labels~$U$.  Then~$\mathcal R$ is molecular.  Note that the two species obtained in this way from the empty graph with~$n$ vertices and the complete graph with~$n$ vertices are equal.  More generally, the species~$\mathcal R$ can be identified with the automorphism group of the graph~$R$.
\end{example}

Any species can be written as a sum of molecular species, and this
\Dfn{molecular decomposition} is uniquely determined.  Finally, a
\Dfn{virtual ($k$-sort) species} is a formal difference of two
($k$-sort) species, up to the natural equivalence relation.

Virtual species allow us to define the \Dfn{multiplicative inverse} and \Dfn{compositional inverse} of a species, subject to a natural condition.
\begin{theorem}[\protect{\cite[Exc.~2.5.7., Prop.~2.5.19]{MR1629341}}]
  A virtual species has a multiplicative inverse if its
  restriction to the empty set equals the species~$1$ or~$-1$.  A $1$-sort virtual
  species has a compositional inverse if its restriction
  to the empty set is the zero species and its restriction to sets of cardinality~$1$ is the species~$X$ or~$-X$.
\end{theorem}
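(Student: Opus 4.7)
The plan is to construct both inverses as formal infinite sums of virtual species, exploiting the natural grading by cardinality so that the restriction to each size is a finite sum and hence unambiguously a virtual species. The sign hypothesis ($\pm 1$ for the restriction to the empty set, $\pm X$ for the restriction to singletons) is needed only to render the leading term invertible in the naive sense; once this is guaranteed, the remaining data of $F$ can be stripped off by a geometric-series or Newton-style recursion.

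For the multiplicative inverse, write $F = \varepsilon + F_{\geq 1}$ with $\varepsilon \in \{1,-1\}$, and replace $F$ by $\varepsilon F$ so that $\varepsilon = 1$. Define
\[
  G \coloneqq \sum_{k \geq 0}\bigl(-F_{\geq 1}\bigr)^k.
\]
This makes sense as a virtual species because $F_{\geq 1}$ vanishes on the empty set, and hence $(F_{\geq 1})^k$ restricted to any set of cardinality $n$ is zero as soon as $k > n$; so each graded piece of the sum involves only finitely many nonzero terms. The telescoping identity $(1 + F_{\geq 1})\sum_{k=0}^{N}(-F_{\geq 1})^k = 1 - (-F_{\geq 1})^{N+1}$, evaluated at cardinality $n$ with $N = n$, then yields $F \cdot G = 1$.

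For the compositional inverse, write $F = \varepsilon X + F_{\geq 2}$ and, after an analogous sign adjustment, assume $\varepsilon = 1$. I construct $G$ by induction on cardinality: set $G_1 \coloneqq X$ and, for $n \geq 2$, define $G_n$ through the requirement that the restriction $(F \circ G)_n$ equal $X_n = 0$. Expanding the composition and isolating the contribution of the linear part $X \circ G$ at size $n$ gives
\[
  (F \circ G)_n = G_n + R_n(G_1, \dots, G_{n-1}),
\]
where $R_n$ is a virtual species built from the pieces $F_m$ of $F$ with $2 \leq m \leq n$ composed with the already constructed $G_1, \dots, G_{n-1}$. Setting $G_n \coloneqq -R_n(G_1, \dots, G_{n-1})$ determines $G$ uniquely as a virtual species.

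The main obstacle is not the algebra of the recursion but the bookkeeping needed to make these formal infinite sums rigorous. One must verify that an expression such as $\sum_{k \geq 0}(-F_{\geq 1})^k$, or the inductively constructed $G$ above, really assembles into a single virtual species rather than merely a sequence of virtual species indexed by cardinality. This rests on the general fact that a sequence of virtual species whose restrictions to each cardinality are eventually zero can be summed termwise by cardinality, the resulting object being automatically virtual because each of its graded pieces is. Once this foundational point is in place, the two constructions above yield the claimed inverses.
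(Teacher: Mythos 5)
The paper does not prove this statement; it imports it from Bergeron--Labelle--Leroux, and the argument given there is essentially the one you reconstruct: a geometric series $\sum_{k\ge 0}(-F_{\ge 1})^k$ for the multiplicative inverse, and a degree-by-degree Newton-style recursion for the compositional inverse, both made rigorous by the observation that each graded piece receives only finitely many nonzero contributions. Your outline is correct. Two small points you gloss over: first, the ``analogous sign adjustment'' in the compositional case is not negation of $F$ but pre- or post-composition with $-X$ (using $(-X)\circ(-X)=X$), which deserves a sentence since composition is not linear in the inner argument; second, your recursion only produces a right inverse $F\circ G=X$, and one should add the standard associativity argument (since $G$ again satisfies the hypotheses, it has a right inverse $H$, and $F=F\circ(G\circ H)=(F\circ G)\circ H=H$, whence $G\circ F=X$) to get a two-sided inverse. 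Neither issue is a genuine gap.
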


For example, the species of sets $\Sets$ satisfies the condition of the first part of the theorem, so its multiplicative
inverse exists.  By~\cref{ex: productofsets}, it is given by $\Sets(-X)$.  Its molecular decomposition begins
with
\[
1 - X - \Sets_2+X^2 - \Sets_3+2X\Sets_2 - X^3 - \Sets_4+2X\Sets_3+\Sets_2^2-3 X^2\Sets_2+X^4+\dots
\]
The molecular decomposition of the compositional inverse of the
species of non-empty sets $\Sets_{\geq 1}$, also known as the
\Dfn{combinatorial logarithm $\Omega$}, begins with
\[
X - \Sets_2 - \Sets_3+X \Sets_2 - \Sets_4+ \Sets_2\circ\Sets_2 + X \Sets_3-X^2 \Sets_2 +\dots
\]

\section{A Decomposition of Graphs}\label{sec:patch decomp}

The base case of \cref{thm:main}, where $\sibling=0$ or $\tuft=0$,
was shown bijectively by Kilibarda \cite{MR2300242} in~2007.  Gessel
and Li \cite{MR2739506} provided an alternative proof using
combinatorial species in~2011.  When considering all graphs, rather
than only connected graphs, the enumeration of mating and co-mating
graphs is equivalent.  However, the maximal number of vertices
sharing the same neighborhood minus~$1$ and the tuft number do not
have the same distribution on the set of connected graphs, which is
why we present their results rephrased in the language of co-mating
graphs.

\begin{theorem}[Kilibarda~2007, Gessel \& Li~2011]\label{thm:KilGL} The
  number of unlabeled co-mating graphs with $n$ vertices is equal to
  the number of unlabeled graphs with no leaves and $n$ vertices.
\end{theorem}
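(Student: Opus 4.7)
The proof is species-theoretic, adapting Gessel and Li. The underlying idea is that every connected graph $G$ other than $K_2$ canonically reduces to a leafless connected graph without siblings, so both the co-mating species and the species of leafless connected graphs decompose as ``a reduction together with local expansion data.'' Concretely, denote by $\mathcal S$ the species of leafless connected graphs without siblings (the possible ``skeletons''), and by $\mathcal L$ the species of leafless connected graphs. The plan is to establish species-level identities of the form
\[ \comating \cong \mathcal S \circ A_1 \qquad\text{and}\qquad \mathcal L \cong \mathcal S \circ A_2,\]
where $A_1$ and $A_2$ are multisort attachment species encoding the local expansion at each skeleton vertex.

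Given such decompositions, \cref{prop:series} implies that $\widetilde{\mathcal S \circ A}(x)$ is obtained from the cycle index $Z_{\mathcal S}$ by the plethystic substitution $p_k \mapsto \widetilde A(x^k)$. It would then suffice to verify the combinatorial identity $\widetilde{A_1}(x) = \widetilde{A_2}(x)$, expressing that ``tufts of leaves'' and ``groups of siblings'' contribute the same isomorphism type generating series at each skeleton vertex. This is the essential symmetry underlying the theorem: the asymmetry between leaves and siblings on the graph side dissolves into a symmetry of the attachment data, which is what makes the species framework so natural here.

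The main obstacle is setting up the decomposition rigorously. The attachment species $A_1, A_2$ are not simply ``a non-empty set of leaves'' or ``a sibling class'', as illustrated by the path $P_4$, which is co-mating and reduces to $K_1$ without being a star; hence the co-mating attachment at the single reduction vertex cannot merely be a set of leaves attached to a root. The correct attachment must encode the iterated reduction process, with its alternating steps (leaves $\to$ new siblings after removal $\to$ contraction $\to$ new leaves $\to \cdots$), capturing all intermediate structures that get collapsed. This is essentially what the patch species $\Patch(X, Y, Z)$ of \cref{exam: cultivations} is designed to encode, and the plan is to define $A_1$ from $\Patch(X, Y, 0)$ (with $Y$ identified with $X$) and $A_2$ from $\Patch(X, 0, Z)$ (with $Z$ identified with $X$), possibly after further refinement. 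The exceptional graph $K_2$, which is neither co-mating nor leafless, must be tracked separately in the bookkeeping. Once these pieces are in place, the theorem reduces to a direct generating-function identity.
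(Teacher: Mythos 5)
Your overall architecture --- write each side as skeleton~$\circ$~attachment, then compare the isomorphism type generating series of the two attachment species via plethysm (\cref{prop:series}) --- is sound, but the plan has a genuine gap exactly at the step you defer. The concrete attachment species you name do not satisfy the key identity: $\Patch(X,X,0)=(1+X)\Sets\big(X\Sets_{\geq 1}(X)\big)-1$ has isomorphism type generating series $(1+x)\prod_{j\geq 2}(1-x^j)^{-1}-1=x+x^2+2x^3+\cdots$, whereas $\Patch(X,0,X)=X+\Sets_{\geq 2}(X)=\Sets_{\geq 1}(X)$ has $x/(1-x)=x+x^2+x^3+\cdots$; these already differ at $x^3$. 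The reason is structural: patches are attachment data relative to the \emph{underlying \comatinggraph} (one round of leaf removal and sibling contraction), not relative to the full reduction $\mathcal S$ you take as skeleton. For the full-reduction decomposition the correct attachment species is not a patch but essentially a compositional inverse of a virtual patch species: in the notation of \cref{prop:comating}, $\comating^R=\mathcal R\circ\Patch(X,-X,0)^{(-1)}$ for $R\neq\bullet$, whose structures are ordered rooted trees of co-mating graphs reducing to a point; moreover the skeleton $\bullet$ is exceptional ($\comating^\bullet=(X-X^2)\circ\Patch(X,-X,0)^{(-1)}$), and justifying any such decomposition requires the confluence of the reduction process (\cref{thm:confluence,cor:confluence}). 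None of this is supplied by ``possibly after further refinement'', and that is where the entire proof lives.

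You are also taking a substantially harder route than either proof the paper points to. Gessel and Li's argument needs only Read's identity $\comating\circ\Sets_{\geq 1}=\Graphs\connected$ together with \cref{prop:petalsY}: substituting $Y=-X$ in $\decorated(X,X+Y,X)=\comating\circ\Sets_{\geq 1}\big(X\Sets(Y)\big)-XY-\Sets_2(X)$ and using $\widetilde{\Sets_{\geq 1}(X\Sets(-X))}(x)=\tfrac{1-x^2}{1-x}-1=x$ gives at once that leafless connected graphs are equinumerous with co-mating graphs. Equivalently, in the paper's own machinery the skeleton is the one-step \comatinggraph, and the two inner species $\mathcal Q^{0,\infty}$ and $\mathcal Q^{\infty,0}$ both have type generating series $x$, so both counts collapse to $\widetilde{\comating}(x)$. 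Your full-reduction decomposition is really the content of the refined \cref{thm:R}; it is a legitimate strengthening, but as a proof of \cref{thm:KilGL} it leaves the essential construction open.
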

Kilibarda~\cite{MR2300242} gives a bijective proof of this theorem.
He describes a bijection between the set of complements of co-mating
graphs, that is, mating graphs, and graphs without leaves.  Since we
are unable to generalize his bijection to the set of all graphs, we
settle for an enumerative proof of \Cref{thm:main}.

To do so, we mimic the techniques of Gessel and Li.  Using the
notation introduced in \Cref{sec:species}, the two intermediate
steps~\cite[thm.~2.2, thm.~3.1]{MR2739506} on which their proof is
based, read as follows.  Remarkably, the first key ingredient was
provided by Read~\cite{read1989enumeration} much earlier.

\begin{proposition}[Read~1989]\label{prop:graphdecomp}
  The species of \comatinggraphs $\comating$ and the species of connected graphs
  $\Graphs\connected$ are related by the equality
  \[
    \comating\circ\Sets_{\geq 1} = \Graphs\connected.
  \]
\end{proposition}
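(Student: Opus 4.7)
The plan is to construct an explicit natural isomorphism between $\comating \circ \Sets_{\geq 1}$ and $\Graphs\connected$ by quotienting each connected graph by its sibling equivalence relation. First I would verify that being siblings is indeed an equivalence relation on $V(G)$ (immediate from $\cNhd(u) = \cNhd(v)$ being reflexive, symmetric, and transitive) and that each sibling class is a clique, since for $u \neq v$ with $\cNhd(u) = \cNhd(v)$ we have $u \in \cNhd(u) = \cNhd(v)$ and hence $u \in \Nhd(v)$. Given a connected graph $G$ on $U$, let $\pi$ be its sibling partition and define the quotient $\tilde G$ on $\pi$ by declaring $[A]$ and $[B]$ adjacent whenever some (equivalently all, using the previous observation) pair $a \in A$, $b \in B$ is adjacent in $G$. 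Since set structures on non-empty blocks are trivial, a $(\comating \circ \Sets_{\geq 1})$-structure on $U$ is precisely the datum of a set partition of $U$ together with a co-mating graph on its blocks, so the pair $(\pi, \tilde G)$ yields such a structure provided $\tilde G$ is co-mating.

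The key technical point is to show that $\tilde G$ has no siblings. I would argue by contradiction: suppose distinct blocks $A \neq B$ satisfy $\cNhd_{\tilde G}([A]) = \cNhd_{\tilde G}([B])$. Then $[A]$ and $[B]$ are adjacent in $\tilde G$ and $\Nhd_{\tilde G}([A]) \setminus \{[B]\} = \Nhd_{\tilde G}([B]) \setminus \{[A]\}$. For any $a \in A$ and $b \in B$, the closed neighborhood $\cNhd_G(a)$ equals the union of $A$ with all blocks $C$ such that $[C]$ is adjacent to $[A]$ in $\tilde G$, and the analogous description holds for $\cNhd_G(b)$. These two unions coincide, so $a$ and $b$ would be siblings in $G$, contradicting $A \neq B$. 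Connectedness of $\tilde G$ is automatic, since the quotient map is surjective.

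For the inverse direction, from a partition $\pi$ of $U$ and a co-mating graph $M$ on $\pi$, I would form the \emph{blow-up} on $U$ by making each block a clique and connecting $a \in A$ to $b \in B$ (with $A \neq B$) exactly when $[A]$ and $[B]$ are adjacent in $M$. A short calculation shows that the sibling classes of the blow-up are exactly the blocks of $\pi$: within a block, closed neighborhoods agree by construction, while across blocks the projections of $\cNhd_G(a)$ and $\cNhd_G(b)$ onto $\pi$ are $\cNhd_M([A])$ and $\cNhd_M([B])$, which differ since $M$ is a co-mating graph. Moreover the blow-up of a connected $M$ is connected. It is then routine to check that the quotient and blow-up constructions are mutually inverse and commute with relabeling bijections, since both are defined purely in terms of the abstract graph structure. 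The main obstacle is the no-siblings-in-the-quotient argument above; everything else is bookkeeping.
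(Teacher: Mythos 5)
Your proof is correct: the quotient-by-sibling-classes / blow-up correspondence, with the key verifications that the quotient is sibling-free and that the sibling classes of a blow-up are exactly the blocks, is exactly the right argument. The paper itself does not prove this proposition (it is cited from Read), but its proof of the patch decomposition (\cref{thm:patch-decomposition}) generalizes precisely this construction, and your central step --- transferring a distinguishing neighbor block $[C]$ of $[A]$ versus $[B]$ down to a distinguishing neighbor of $a$ versus $b$ --- is the same argument used there.
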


\begin{proposition}[Gessel \& Li~2011]\label{prop:petalsY}
    The species of graphs other than $K_2$ with some leaves of sort $Y$ and other vertices of sort $X$ equals
    \[
    \decorated(X, X+Y, X) = \Graphs\connected\big(X\Sets(Y)\big) - XY - \Sets_2(X).
    \]
\end{proposition}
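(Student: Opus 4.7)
The plan is to prove the identity by constructing an explicit natural isomorphism of $2$-sort species
\[
  \Graphs\connected\bigl(X\Sets(Y)\bigr) \;\cong\; \decorated(X, X+Y, X) + XY + \Sets_2(X),
\]
from which the displayed formula follows by rearrangement. Unfolding the composition, a $\Graphs\connected\bigl(X\Sets(Y)\bigr)$-structure on a $2$-set $(U_1, U_2)$ amounts to a set partition of $U_1 + U_2$ whose blocks each contain exactly one element of $U_1$ together with a (possibly empty) subset of $U_2$, plus a connected graph $G_X$ on the set of blocks. From such data I form a graph $G$ on $U_1 + U_2$ by retaining every edge of $G_X$ and, for each $y \in U_2$, adjoining an edge from $y$ to the unique element of $U_1$ in its block; the inverse will send a graph $G$ to the pair $(G[U_1],\ y \mapsto \text{unique neighbor of } y)$.

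The main task is to identify the image of this map. By construction every $y \in U_2$ has degree one in $G$, so every non-leaf of $G$ lies in $U_1$. When $|U_1|+|U_2| \geq 3$ the graph $G$ differs from $K_2$, and the sort assignment matches the requirements of $\decorated(X, X+Y, X)$ exactly: leaves are of sort $X$ or $Y$, and all other vertices are of sort $X$. The well-definedness of the inverse—equivalently, the connectivity of $G[U_1]$—follows from the standard fact that deleting a leaf of a connected graph preserves connectivity, together with the observation that any connected graph other than $K_2$ possesses at least one non-leaf vertex.

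The remaining $\Graphs\connected\bigl(X\Sets(Y)\bigr)$-structures are those with $|U_1|+|U_2|=2$, for which $G = K_2$ is forced and therefore falls outside of $\decorated$. Only $(|U_1|,|U_2|) = (2,0)$ and $(1,1)$ occur—the case $(0,2)$ is ruled out because the composition requires at least one block—and these contribute the molecular summands $\Sets_2(X)$ and $XY$, respectively. The one subtle point, and the main obstacle in a clean write-up, is to check that the substitution $Z \mapsto X$ in $\decorated$ is compatible with the bijection: specifically, two sibling $Y$-vertices would have to be adjacent, so they would span a $K_2$ exhausting all of $G$, contradicting $G \neq K_2$. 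Once this is noted, siblings of $G$ necessarily lie in $U_1$ and are correctly sorted, and naturality with respect to relabelings is transparent.
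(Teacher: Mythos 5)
Your argument is correct. The paper itself gives no proof of this proposition: it is quoted from Gessel and Li (their Theorem~3.1), with only a remark explaining why the error terms $-XY-\Sets_2(X)$ appear (namely, because $\decorated$ excludes structures of size two). So your write-up supplies a self-contained proof where the paper supplies a citation. Your bijection — detach each pendant $Y$-leaf onto the block of its unique neighbour, so that a $\Graphs\connected\bigl(X\Sets(Y)\bigr)$-structure is a connected graph on the blocks with a possibly empty set of $Y$-leaves hanging off each block representative — is exactly the decomposition underlying Gessel and Li's result, and it is also the template the authors follow for their generalization, the patch decomposition of \cref{thm:patch-decomposition}, whose proof likewise builds the graph by joining roots of blocks according to a connected graph on the blocks and then isolates the size-two exceptions by hand. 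All the delicate points are handled: the inverse is well defined because a leaf whose unique neighbour is again a leaf forces $G=K_2$, deleting the $U_2$-leaves preserves connectivity and leaves $U_1$ nonempty, and the only composed structures producing $K_2$ are the two of total size two, accounting for $XY$ and $\Sets_2(X)$. Your ``subtle point'' about the third sort is really the observation, already made in the paper's \cref{exam: 3-labeled graphs}, that in a connected graph other than $K_2$ no leaf is a sibling, so every vertex of $U_2$ is unambiguously of leaf sort and every vertex of $U_1$ is admissible under the substitution $(X, X+Y, X)$; one small imprecision is that the case $(\lvert U_1\rvert,\lvert U_2\rvert)=(0,2)$ is excluded not because the composition needs a block but because every block must carry exactly one element of sort $X$ — this does not affect the conclusion.
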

We remark that the two error terms on the right-hand side differ from
the result of Gessel and Li only because we restrict the species
$\decorated$ to structures of size other than two.

Combining \cref{prop:graphdecomp,prop:petalsY} we obtain
the equation
\begin{equation*}
    \decorated(X, X+Y, X) = \comating\circ\Sets_{\geq 1}\big(X\Sets(Y)\big) - XY - \Sets_2(X),
\end{equation*}
which is the key ingredient of Gessel and Li's proof of
\cref{thm:KilGL}.  Our goal is to generalize this decomposition so
that we can also keep track of siblings.  Recall that $\Patch$ denotes
the species of patches defined in \cref{exam: cultivations}.
\begin{theorem}[Patch Decomposition of Graphs]\label{thm:patch-decomposition}
  The species of graphs other than $K_2$ with some leaves of sort $Y$, siblings of sort $Z$ and other vertices of sort $X$ equals
  \[
    \decorated(X, X+Y, Z) = \comating\circ\Patch - XY - \Sets_2(Z).
  \]
\end{theorem}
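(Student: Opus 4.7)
The plan is to derive the decomposition as a natural refinement of the combined Gessel--Li and Read results, by tracking which vertices of the assembled graph are actually siblings. Combining \cref{prop:graphdecomp,prop:petalsY} yields
\[
\decorated(X, X+Y, X) + XY + \Sets_2(X) = \comating \circ \Sets_{\geq 1}\bigl(X\Sets(Y)\bigr).
\]
Using \cref{ex: productofsets} together with the decomposition $X\Sets(Y) = X + \Tuft$, one has
\[
\Sets_{\geq 1}\bigl(X\Sets(Y)\bigr) = \Sets(X)\Sets(\Tuft) - 1 = \bigl(1 + X + \Sets_{\geq 2}(X)\bigr)\Sets(\Tuft) - 1 = \Patch(X, Y, X),
\]
so the identity becomes
\[
\decorated(X, X+Y, X) + XY + \Sets_2(X) = \comating \circ \Patch(X, Y, X).
\]
The remaining task is to refine this identity so as to separate sibling vertices under a new sort label $Z$.

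To this end I would unpack the graph realization underlying the identity: a $\comating \circ \Patch$-structure $(M, \{P_v\})$ assembles into a graph on the disjoint union of the patch vertex sets by attaching each tuft leaf only to its tuft root, making all leafless and tuft roots within a single patch pairwise adjacent, and, for every edge of $M$, placing a complete bipartite join between the root vertices of the two incident patches. The heart of the proof is then the following classification claim: \emph{in the assembled graph, the sibling vertices are exactly the leafless roots contained in patches whose leafless root part has the form $\Sets_{\geq 2}(\cdot)$.}

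I expect this claim to be the main technical obstacle. The easy direction, that the $\geq 2$ leafless roots in such a patch are mutually siblings, follows because they are pairwise adjacent and share the same neighbors both inside their patch (namely the tuft roots) and outside it (via the bipartite join with adjacent patches). For the harder direction one must rule out further siblings: two tuft roots of the same patch, or a leafless root and a tuft root of the same patch, are distinguished by their respective tuft-leaf sets, which are non-empty by the definition of $\Tuft$; and any two vertices in distinct patches $P \neq Q$ are distinguished because $M$ is co-mating, so $\cNhd_M(P) \neq \cNhd_M(Q)$, and this asymmetry propagates to the closed neighborhoods of the root vertices in the assembled graph (using that every patch contains at least one root vertex).

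Once the claim is established, the theorem follows by relabeling the sibling vertices with the sort $Z$ on both sides of the identity above. On the left, $\decorated(X, X+Y, X)$ becomes $\decorated(X, X+Y, Z)$, and the $K_2$ of two siblings accounted for by $\Sets_2(X)$ becomes $\Sets_2(Z)$; the $K_2$ with one $X$-leaf and one $Y$-leaf recorded by the $XY$ correction is unaffected. On the right, the claim identifies the relabeled vertices with precisely the $X$-labels inside the $\Sets_{\geq 2}(X)$ part of patches, so $\Patch(X, Y, X)$ becomes $\Patch(X, Y, Z)$, producing the asserted identity.
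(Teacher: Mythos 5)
Your proposal is correct in substance, and its technical core coincides with the paper's proof, but you reach it by a genuinely different route. The paper constructs the equivariant bijection between triples $\left(\pi, M, (P_B)_{B\in\pi}\right)$ and graphs directly, from scratch; you instead start from the already-known specialization at $Z=X$ obtained by composing Read's and Gessel--Li's results, observe the clean algebraic identity $\Sets_{\geq 1}\bigl(X\Sets(Y)\bigr) = \Sets(X)\Sets(\Tuft)-1 = \Patch(X,Y,X)$ (which the paper never states explicitly, and which is a nice way to see where the definition of $\Patch$ comes from), and then refine by tracking which vertices of the assembled graph are siblings. What you correctly identify as the main obstacle --- the classification of siblings in the assembled graph --- is exactly the heart of the paper's argument, and your sketch of both directions (mutual siblinghood of the $\Sets_{\geq 2}$-part roots; distinguishing tuft roots by their leaf sets; propagating $\cNhd_M(B)\neq\cNhd_M(B')$ to the roots via a third block adjacent to one but not the other) matches the paper's reasoning step for step. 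Your approach buys a conceptual derivation of $\Patch$ and reuses prior results; the paper's buys self-containedness and, in particular, does not need to assume that the bijections underlying Propositions \ref{prop:graphdecomp} and \ref{prop:petalsY} are the specific assembly map you describe.

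Two small points to tighten. First, your classification claim as literally stated fails for the size-two structure consisting of a single patch that is a single tuft with one leaf: there the assembled graph is $K_2$, both vertices are siblings, yet neither is a leafless root in a $\Sets_{\geq 2}$ part. You do account for this via the $XY$ correction term at the end, but the claim itself should carry the caveat that it holds only for structures on more than two vertices (the paper isolates exactly these two exceptional structures in its Figure of exceptions). Second, your case analysis for the harder direction omits the tuft leaves: one should note that a tuft leaf has closed neighborhood $\{\ell, r\}$ and hence can be a sibling of nothing except, again, in that same exceptional case.
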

At the heart of the patch decomposition is the \Dfn{underlying
  \comatinggraph} $M_G$ of a $\decorated(X, X+Y, Z)$-structure $G$.
It is the $\comating$-structure obtained by removing all leaves of
sort $Y$ in $G$ and then contracting each group of siblings in the
resulting graph, ignoring whether vertices are of sort $X$ or $Z$, to
a single vertex.  A visualisation is provided in
\Cref{fig:FloralDecomposition}.  We will show that each subgraph of
$G$ corresponding to a vertex of the \comatinggraph can be
interpreted as a patch.
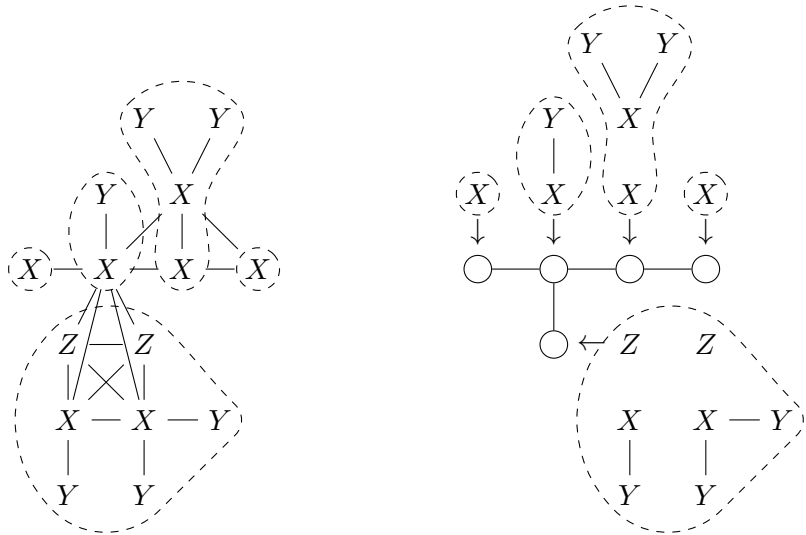
\begin{figure}[h]
    \centering
    \begin{tikzpicture}
        \node (X1) at (0,0) {$X$};
        \node (X2) at (1,0) {$X$};
        \node (X3) at (2,0) {$X$};
        \node (X4) at (2,1) {$X$};
        \node (X5) at (3,0) {$X$};
        \node (Y21) at (1,1) {$Y$};
        \node (Y41) at (1.5,2) {$Y$};
        \node (Y42) at (2.5,2) {$Y$};
        \node (Z1) at (.5, -1) {$Z$};
        \node (Z2) at (1.5, -1) {$Z$};
        \node (X6) at (.5, -2) {$X$};
        \node (X7) at (1.5, -2) {$X$};
        \node (Y61) at (.5, -3) {$Y$};
        \node (Y71) at (1.5, -3) {$Y$};
        \node (Y72) at (2.5, -2) {$Y$};

        \draw (X1) -- (X2);
        \draw (X2) -- (X3);
        \draw (X2) -- (X4);
        \draw (X4) -- (X3);
        \draw (X4) -- (X5);
        \draw (X5) -- (X3);
        \draw (X2) -- (Y21);
        \draw (X4) -- (Y41);
        \draw (X4) -- (Y42);
        \draw (X6) -- (Y61);
        \draw (X7) -- (Y71);
        \draw (X7) -- (Y72);
        \draw (X2) -- (Z1);
        \draw (X2) -- (Z2);
        \draw (X2) -- (X6);
        \draw (X2) -- (X7);
        \draw (Z1) -- (Z2);
        \draw (Z1) -- (X6);
        \draw (Z1) -- (X7);
        \draw (Z2) -- (X6);
        \draw (Z2) -- (X7);
        \draw (X7) -- (X6);

        \draw[style=dashed] (0.2, 0.2) to[out=-45, in=45] (0.2, -0.2) to[out=-135, in=-45] (-0.2, -0.2) to [out=135, in=-135] (-0.2, 0.2) to[out=45, in=135] (0.2, 0.2);
        \draw[style=dashed] (0.8, -0.2) to[out=-45, in = -135] (1.2, -0.2) to[out=45, in=-45] (1.2, 1.2) to[out=135, in=45] (0.8, 1.2) to [out=-135, in=135] (0.8, -0.2);
        \draw[style=dashed] (1.3, 2.2) to [out=45, in=135] (2.7, 2.2) to[out=-45, in=90] (2.3, 0.8) to[out=-90, in=45] (2.2, -.2) to[out = -135, in=-45] (1.8, -.2) to[out=135, in=-90] (1.7, 0.8) to [out=90, in=-135] (1.3, 2.2);
        \draw[style=dashed] (3.2, 0.2) to[out=-45, in=45] (3.2, -0.2) to[out=-135, in=-45] (2.8, -0.2) to [out=135, in=-135] (2.8, 0.2) to[out=45, in=135] (3.2, 0.2);
        \draw[style=dashed] (.3, -.8) to[out=45, in=135] (1.8, -.8) to [out=-45, in=135] (2.7, -1.8) to [out=-45, in=45] (2.7, -2.2) to [out=-135, in=45] (1.7, -3.2) to[out=-135, in=-45] (0.3, -3.2) to [out=135, in=-135] (.3, -.8);
    \end{tikzpicture}
    \hspace{2cm}
    \begin{tikzpicture}
        \node[circle, style=draw] (V1) at (0,0) {};
        \node[circle, style=draw] (V2) at (1,0) {};
        \node[circle, style=draw] (V3) at (2,0) {};
        \node[circle, style=draw] (V4) at (3,0) {};
        \node[circle, style=draw] (V5) at (1,-1) {};

        \draw (V1) -- (V2) -- (V3) -- (V4);
        \draw (V2) -- (V5);

        \node (X1) at (0,1) {$X$};
        \node (X2) at (1,1) {$X$};
        \node (X3) at (2,1) {$X$};
        \node (X4) at (2,2) {$X$};
        \node (X5) at (3,1) {$X$};
        \node (Y21) at (1,2) {$Y$};
        \node (Y41) at (1.5,3) {$Y$};
        \node (Y42) at (2.5,3) {$Y$};

        \draw (X2) -- (Y21);
        \draw (X4) -- (Y41);
        \draw (X4) -- (Y42);

        \node (Z1) at (2, -1) {$Z$};
        \node (Z2) at (3, -1) {$Z$};
        \node (X6) at (2, -2) {$X$};
        \node (X7) at (3, -2) {$X$};
        \node (Y61) at (2, -3) {$Y$};
        \node (Y71) at (3, -3) {$Y$};
        \node (Y72) at (4, -2) {$Y$};

        \draw (X6) -- (Y61);
        \draw (X7) -- (Y71);
        \draw (X7) -- (Y72);

        \draw[style=dashed] (0.2, 1.2) to[out=-45, in=45] (0.2, 0.8) to[out=-135, in=-45] (-0.2, 0.8) to [out=135, in=-135] (-0.2, 1.2) to[out=45, in=135] (0.2, 1.2);
        \draw[style=dashed] (0.8, 0.8) to[out=-45, in = -135] (1.2, 0.8) to[out=45, in=-45] (1.2, 2.2) to[out=135, in=45] (0.8, 2.2) to [out=-135, in=135] (0.8, 0.8);
        \draw[style=dashed] (1.3, 3.2) to [out=45, in=135] (2.7, 3.2) to[out=-45, in=90] (2.3, 1.8) to[out=-90, in=45] (2.2, .8) to[out = -135, in=-45] (1.8, .8) to[out=135, in=-90] (1.7, 1.8) to [out=90, in=-135] (1.3, 3.2);
        \draw[style=dashed] (3.2, 1.2) to[out=-45, in=45] (3.2, .8) to[out=-135, in=-45] (2.8, .8) to [out=135, in=-135] (2.8, 1.2) to[out=45, in=135] (3.2, 1.2);
        \draw[style=dashed] (1.8, -.8) to[out=45, in=135] (3.3, -.8) to [out=-45, in=135] (4.2, -1.8) to [out=-45, in=45] (4.2, -2.2) to [out=-135, in=45] (3.2, -3.2) to[out=-135, in=-45] (1.8, -3.2) to [out=135, in=-135] (1.8, -.8);

        \node (A1) at (0, .5) {$\downarrow$};
        \node (A2) at (1, .5) {$\downarrow$};
        \node (A3) at (2, .5) {$\downarrow$};
        \node (A4) at (3, .5) {$\downarrow$};
        \node (A5) at (1.5, -1) {$\leftarrow$};
    \end{tikzpicture}
    \caption{Decomposing a $\decorated(X, X+Y, Z)$-structure into its underlying \comatinggraph $M_G$ and an assignment of patches to its vertices.}
    \label{fig:FloralDecomposition}
\end{figure}

Note that the \comatinggraph of a $\decorated(X, X+Y, Z)$-structure
$G$ may have leaves, but cannot have siblings.  Consider, for
example, the path graph with $n\geq 5$ vertices with both leaves of
sort $Y$ and inner vertices of sort $X$.  The corresponding
\comatinggraph is the path graph of length $n-2$.  By contrast, after
removing the two leaves of the path graph with~$4$ vertices, we
obtain $K_2$, which consists of two siblings.  Thus, the
\comatinggraph of the path with~$4$ vertices is a single
vertex.
\begin{proof}[Proof of \cref{thm:patch-decomposition}]
  Let $U=(U_1, U_2, U_3)$ be a $3$-set of labels.  By the definition
  of composition, a structure in $(\comating\circ\Patch)[U]$ is a triple
  $\left(\pi, M, \left(P_B\right)_{B\in\pi}\right)$ with
  $\pi\in\Par[U]$, $M\in\comating[\pi]$ and $P_B\in\Patch[B]$ for each
  $B\in\pi$.

  We have to exhibit an equivariant bijection between these triples
  and graphs $G\in\decorated(X, X+Y, Z)[U]$, whose leaves can be of
  sort $X$ or $Y$, with exceptions for
  $\lvert U_1\rvert = \lvert U_2\rvert = 1$, $U_3=\emptyset$ and for
  $U_1=U_2=\emptyset$, $\lvert U_3\rvert=2$.

  Given the triple, the graph $G$ is obtained from the disjoint union
  of the patches $(P_B)_{B\in\pi}$ as follows.  For each block $B\in\pi$, we
  add edges between all the roots of $P_B$. Additionally, we add
  edges between all roots of $P_B$ and $P_{B'}$ if $B$ and $B'$ are
  adjacent in $M$.

  Let us first check that this yields an element $G$ in
  $\decorated(X, X+Y, Z)[U]$. Since a patch $P_B$ contains either no,
  or at least two vertices of sort $Z$, vertices of sort $Z$ cannot
  be leaves in $G$, provided that there is at least one other root in
  the same patch or in a patch adjacent to $P_B$.  The exceptional
  case of an $\comating\circ\Patch$-structure which does not
  correspond to a $\decorated(X, X+Y, Z)$-structure is illustrated on
  the left of \Cref{fig:exception}.

  Thus, it remains to show that precisely the vertices of sort $Z$
  within a block are siblings.  Two vertices in the same block $B$
  are siblings in $G$ if and only if they are both of sort $Z$,
  unless $M$ is a single vertex and $P_B$ is a single tuft with a
  single leaf.  The exceptional case is illustrated on the right of
  \Cref{fig:exception}.
  \begin{figure}[h]
    \centering
    \begin{tikzpicture}
        \node (Z1) at (-.5,1) {Z};
        \node (Z2) at (.5,1) {Z};
        \node[circle, style=draw] (V1) at (0, -.2) {};
        \draw[style=dashed] (-.7, 1.2) to[out=45, in=135] (.7, 1.2) to[out=-45, in=45] (.7, 0.8) to [out=-135, in=-45] (-.7, .8) to[out=135, in=-135] (-.7, 1.2);
        \node (A1) at (0, .3) {$\downarrow$};
        \node (A2) at (1.5, .6) {$\mapsto$};
        \node (Z3) at (2, .6) {$Z$};
        \node (Z4) at (3, .6) {$Z$};
        \draw (Z3) -- (Z4);
    \end{tikzpicture}
    \hspace{1cm}
    \begin{tikzpicture}
        \node (X1) at (-.5,1) {X};
        \node (Y1) at (.5,1) {Y};
        \draw (X1) -- (Y1);
        \node[circle, style=draw] (V1) at (0, -.2) {};
        \draw[style=dashed] (-.7, 1.2) to[out=45, in=135] (.7, 1.2) to[out=-45, in=45] (.7, 0.8) to [out=-135, in=-45] (-.7, .8) to[out=135, in=-135] (-.7, 1.2);
        \node (A1) at (0, .3) {$\downarrow$};
        \node (A2) at (1.5, .6) {$\mapsto$};
        \node (X2) at (2, .6) {$X$};
        \node (Y2) at (3, .6) {$Y$};
        \draw (X2) -- (Y2);
    \end{tikzpicture}
    \caption{The two elements of $\comating\circ\Patch$ that do not correspond to elements of $\decorated(X, X+Y, Z)$.}
    \label{fig:exception}
  \end{figure}

  So it suffices to show that $u\in B$ and $u'\in B'$ with $B\neq B'$
  are not siblings.  If $B$ and $B'$ are not adjacent in $M$, then
  $u$ and $u'$ are not adjacent in $G$ and therefore not siblings.
  If $B$ and $B'$ are adjacent in $M$, the closed neighborhoods of
  $B$ and $B'$ must be distinct because $M$ has no siblings.  Without
  loss of generality, suppose that $B''$ is a neighbor of $B$ but
  not of $B'$.  Then, any root in $P_{B''}$ is a neighbor of $u$ but
  not of $u'$, so $u$ and $u'$ are not siblings.

  In fact, we have shown that any group of vertices of $G$ which are
  siblings in the graph obtained from $G$ by removing all leaves of
  sort $Y$ coincides with the roots in a patch $P_B$.  Thus, $M$ is
  precisely the \comatinggraph of~$G$.

  Conversely, given a graph $G\in\decorated(X, X+Y, Z)[U]$ we obtain
  the triple $\left(\pi, M, \left(P_B\right)_{B\in\pi}\right)$ as
  follows.  First, we let $M$ be the \comatinggraph of $G$.  On the
  one hand, this induces a partition $\pi$ of the vertex set of $G$.
  On the other hand, the subgraph of $G$ induced by a block $B$ of
  $\pi$ corresponds to a patch $P_B$.

  The two maps intertwine with relabeling, since their definitions
  do not involve any choice depending on the labels.  We skip the
  tedious and unenlightening proof that the two constructions are
  inverses of each other.
\end{proof}

\section{Symmetry of the Sibling Number and the Tuft Number}
\label{sec:symmetry}

In this section we prove \cref{thm:main}.  Let us first refine the
patch decomposition \cref{thm:patch-decomposition} to take the
sibling number into account.  To do so, let $\decorated^\sibling$ be
the subspecies of the species of \decoratedgraphs $\decorated$ with
sibling number at most~$\sibling$ and let $\Patch^{\sibling}$ be the
subspecies of the species of patches with sibling number at
most~$\sibling$:
\[
  \Patch^{\sibling}(X, Y, Z) %
  = \left(1 + X + \Sets_{2\leq\bullet<2+\sibling}(Z)\right) %
  \cdot(\Sets\circ\Tuft) - 1.
\]
\begin{corollary}\label{thm:patch-decomposition-siblings}   The species of graphs other than $K_2$ with restricted sibling number, some leaves of sort $Y$, siblings of sort $Z$ and other vertices of sort~$X$ equals
  \[
    \decorated^\sibling(X, X+Y, Z) %
    = \comating\circ\Patch^{\sibling} %
    - XY - [\sibling > 0] \Sets_2(Z).
  \]
\end{corollary}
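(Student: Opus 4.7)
The plan is to observe that the statement is a direct refinement of \cref{thm:patch-decomposition}: the bijection constructed in its proof already records the sibling statistic locally within each patch, so it should suffice to restrict the bijection to triples $(\pi, M, (P_B)_{B\in\pi})$ whose patches have a bounded number of $Z$-roots.

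The key observation, implicit in the proof of \cref{thm:patch-decomposition}, is that under the bijection between $(\comating\circ\Patch)[U]$ and $\decorated(X, X+Y, Z)[U]$ (outside the two exceptional structures), two vertices of $G$ are siblings if and only if they lie in the same block $B$ of the partition $\pi$ underlying the co-mating graph and are both roots of sort $Z$ in the patch $P_B$. Indeed, vertices in different blocks are never siblings, because the co-mating graph $M$ has no siblings, and within a single block the only nontrivial closed-neighborhood class among the roots is the set of $Z$-roots. Consequently, a patch $P_B$ with $k$ roots of sort $Z$ contributes one sibling class of size $k$ to $G$, and all other sibling classes of $G$ are singletons. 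Therefore $\sibling(G) \leq \sibling$ if and only if every patch in the decomposition contains at most $1+\sibling$ roots of sort $Z$, which is precisely the effect of replacing $\Sets_{\geq 2}(Z)$ by $\Sets_{2 \leq \bullet < 2+\sibling}(Z)$ in the definition of $\Patch^\sibling$.

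It then remains to adjust the two exceptional structures subtracted in \cref{thm:patch-decomposition}. The exception corresponding to a single vertex of $M$ carrying a patch that is a single tuft with one leaf maps to $K_2$ of sort $XY$; the underlying patch belongs to $\Patch^\sibling$ for every $\sibling$, so the term $-XY$ must always be subtracted. The exception corresponding to a single vertex of $M$ carrying a patch of exactly two $Z$-roots maps to $K_2$ of sort $\Sets_2(Z)$; this patch lies in $\Patch^\sibling$ if and only if $\sibling \geq 1$, which yields the indicator $[\sibling > 0]$. The main (and only minor) obstacle is to confirm that truncating the $Z$-root sets introduces no new degeneracies; since the characterization of siblings above is entirely local to each patch, the restricted bijection is simply the restriction of the unrestricted one, and no further verification is required.
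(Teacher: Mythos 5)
Your proposal is correct and follows essentially the same route as the paper: it invokes the bijection of \cref{thm:patch-decomposition}, notes that sibling groups of $G$ correspond exactly to the sets of $Z$-roots within single patches (so bounding the sibling number amounts to restricting each patch to $\Patch^{\sibling}$), and then checks which of the two exceptional size-two structures survive the restriction, yielding the unconditional $-XY$ and the conditional $-[\sibling>0]\Sets_2(Z)$. The paper's own proof is the same argument, stated slightly more tersely.
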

\begin{proof}
  Let $U=(U_1, U_2, U_3)$ be a $3$-set of labels.  By
  \cref{thm:patch-decomposition} we have an equivariant bijection
  between graphs $G\in\decorated(X, X+Y, Z)[U]$ and triples
  $\left(\pi, M, \left(P_B\right)_{B\in\pi}\right)$ with
  $\pi\in\Par[U]$, $M\in\comating[\pi]$ and $P_B\in\Patch[B]$ for
  each $B\in\pi$, with the exception of
  $(\comating\circ\Patch)$-structures of size~$2$.

  We have shown that each group of siblings of $G$ correspond to the
  roots of sort $Z$ in a patch.  Thus the the sibling number of $G$
  is at most $\sibling$ if and only if $P_B\in\Patch^{\sibling}[B]$ for
  each $B\in\pi$.

  The possible exceptional
  $(\comating\circ\Patch^{\sibling})$-structures of size $2$ are
  depicted in \Cref{fig:exception}.  If $\sibling=0$, the unique
  $(\comating\circ\Patch^{\sibling})$-structure of size $2$ is a single
  tuft.  Otherwise, if $\sibling>0$, there is an additional structure
  consisting of two vertices of sort $Z$.
\end{proof}

It remains to take into account the tuft number.  One might expect
that this can be achieved by restricting the tuft number of each
tuft appearing in $\Patch^{\sibling}$.  This is not the case, because
$\decorated^\sibling(X, X+Y, Z)$ may have leaves of sort $X$.

Instead, we first reduce the problem to the enumeration of
\decoratedgraphs with bounded sibling number and tuft number
equal to~$1$, using a suitable substitution of virtual species.  In a
second step we replace each tuft in a graph with tuft
number~$1$ with a tuft having at most $\tuft$ leaves.

\begin{lemma}\label{lem:decoratedgeneric}
  Let $\mathcal H$ be a set of connected graphs closed under
  replacing each set of leaves adjacent to the same vertex with any
  positive number of leaves adjacent to this vertex.  Let
  $\decoratedgeneric$ be the subspecies of $\decorated$ whose
  structures are isomorphic to a graph in $\mathcal H$.  Let
  $\decoratedgeneric^\tuft$ be the subspecies of
  $\decoratedgeneric$ whose structures have tuft number at most
  $\tuft$.
  Finally, let
  $\EInProofOfThm_\tuft = \Sets_{1\leq\bullet <
    1+\tuft}(Y)$.

  Then, if $\mathcal H$ contains the single vertex graph,
  \begin{align*}
    \decoratedgeneric^\tuft(X, Y, Z) %
    &= \decoratedgeneric(X, \Omega(\EInProofOfThm_\tuft), Z) + X\Omega(\EInProofOfThm_\tuft) - [\tuft > 0] X Y,\\
    \intertext{whereas otherwise,}
    \decoratedgeneric^\tuft(X, Y, Z) %
    &= \decoratedgeneric(X, \Omega(\EInProofOfThm_\tuft), Z).
  \end{align*}
\end{lemma}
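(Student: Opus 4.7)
The plan is to realize the substitution $Y\mapsto\Omega(\EInProofOfThm_\tuft)$ as a species-theoretic implementation of the restriction on tuft sizes, via the identity
\[
\Sets_{\geq 1}(\Omega(\EInProofOfThm_\tuft))=\EInProofOfThm_\tuft,
\]
which follows immediately from $\Omega$ being the compositional inverse of $\Sets_{\geq 1}$. In any $\decoratedgeneric(X,Y,Z)$-structure the $Y$-atoms attached to each $X$-vertex that has at least one leaf form a non-empty set, i.e.\ an $\Sets_{\geq 1}(Y)$-structure. Substituting $Y\mapsto\Omega(\EInProofOfThm_\tuft)$ therefore transforms each such tuft into $\EInProofOfThm_\tuft$, a non-empty set of $Y$-atoms of size at most $\tuft$, which is precisely the tuft data defining $\decoratedgeneric^\tuft$.

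First I would construct the equivariant correspondence between $\decoratedgeneric^\tuft(X,Y,Z)$-structures and the generic structures of $\decoratedgeneric(X,\Omega(\EInProofOfThm_\tuft),Z)$ by leaving the base graph (on sorts $X$ and $Z$) and the set of tufted $X$-vertices untouched, and applying the identity above locally at each tufted vertex to rewrite a size-restricted tuft as a non-empty set of virtual $\Omega(\EInProofOfThm_\tuft)$-leaves. Since the construction is intrinsic to the graph, it respects relabelings, and because no $Y$-atom of $\decoratedgeneric$ lies outside some $\Sets_{\geq 1}(Y)$-tuft, the local re-encoding extends to a global species equality away from the exceptional sector identified below.

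The main obstacle, which drives both the case split and the correction term, is the \emph{star-shaped} sector of $\decoratedgeneric$, namely those $\decoratedgeneric$-structures consisting of a single $X$-vertex with a non-empty set of $Y$-leaves attached. Because $\decoratedgeneric\subseteq\decorated=\mathcal{G}_{\neq 2}$ excludes $K_2$, these stars occur only for tuft size $k\geq 2$, and after substitution they contribute
\[
X\cdot\Sets_{\geq 2}(\Omega(\EInProofOfThm_\tuft))=X\bigl(\EInProofOfThm_\tuft-\Omega(\EInProofOfThm_\tuft)\bigr).
\]
Meanwhile, in $\decoratedgeneric^\tuft$ the stars with $2\leq k\leq \tuft$ contribute $X\bigl(\EInProofOfThm_\tuft-[\tuft>0]\,Y\bigr)$. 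If $\mathcal{H}$ contains the single vertex graph, the closure hypothesis forces all these stars to lie in $\mathcal{H}$, and subtracting the two star contributions yields exactly the correction $X\Omega(\EInProofOfThm_\tuft)-[\tuft>0]\,XY$; the single-vertex term $X$ itself matches on both sides, so no further adjustment is necessary. Conversely, if $\mathcal{H}$ does not contain the single vertex graph, the star sector is absent and the substitution formula holds without correction.

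The hardest step I anticipate is making the local tuft-rewriting rigorous as an identity of virtual species rather than merely of generating series, by carefully tracking the signed contributions coming from $\Omega$ and verifying that the only discrepancy between $\decoratedgeneric^\tuft$ and $\decoratedgeneric(X,\Omega(\EInProofOfThm_\tuft),Z)$ lies in the isolated star sector treated above.
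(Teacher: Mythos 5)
Your proposal is correct and follows essentially the same route as the paper: both rest on the identity $\Sets_{\geq 1}(\Omega(\EInProofOfThm_\tuft))=\EInProofOfThm_\tuft$ applied to the non-empty set of leaves at each tufted vertex, with the star graphs (and the excluded $K_2$) producing exactly the correction $X\Omega(\EInProofOfThm_\tuft)-[\tuft>0]XY$. The only presentational difference is that the paper makes the ``local rewriting'' you flag as the hard step rigorous by routing both $\decoratedgeneric^\tuft$ and $\decoratedgeneric$ through the intermediate species $\decoratedgeneric^1$ of structures with tuft number at most one and then eliminating it, which is precisely the formalization your sketch calls for.
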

\begin{proof}
  Suppose first that $\mathcal H$ contains the single vertex graph.  Then any graph in $\decoratedgeneric^\tuft[U]$, with the
  exception of the star graphs, can be obtained from a graph in
  $\decoratedgeneric^1$ by replacing each tuft
  with a tuft having at least one and at most $\tuft$ leaves.  The star graphs
  in $\decoratedgeneric^\tuft[U]$ consist of a single
  vertex of sort $X$ together with at least two and at most
  $\tuft$ leaves of sort $Y$.  Therefore, setting
  $\EInProofOfThm_\tuft \coloneqq\Sets_{1\leq\bullet < 1+\tuft}(Y)$ and using
  that
  $\Sets_{2\leq\bullet < 1+\tuft}(Y) = \EInProofOfThm_\tuft - [\tuft
  > 0] Y$, we obtain
  \begin{equation}\label{eq:reduce}
    \decoratedgeneric^\tuft(X, Y, Z) %
    = \decoratedgeneric^1(X, \EInProofOfThm_\tuft, Z) %
    + X \EInProofOfThm_\tuft - [\tuft > 0] X Y.
  \end{equation}
  Setting $\tuft=\infty$ in this equation yields
  \[
    \decoratedgeneric(X, Y, Z) %
    = \decoratedgeneric^1(X, \mathcal{E}_{\geq 1}(Y), Z) %
    + X \mathcal{E}_{\geq 1}(Y) - X Y.
  \]
  Substituting $\Omega(\EInProofOfThm_\tuft)$ for $Y$ we obtain
  \begin{equation}\label{eq:invert}
    \decoratedgeneric(X, \Omega(\EInProofOfThm_\tuft), Z) %
    = \decoratedgeneric^1(X, \EInProofOfThm_\tuft, Z) %
    + X \EInProofOfThm_\tuft - X \Omega(\EInProofOfThm_\tuft).
  \end{equation}
  Therefore,
  \begin{align*}
    \decoratedgeneric^\tuft(X, Y, Z)
    &\stackrel{\eqref{eq:reduce}}{=} \decoratedgeneric^1(X, \EInProofOfThm_\tuft, Z) %
      + X \EInProofOfThm_\tuft - [\tuft > 0] X Y\\
    &\stackrel{\eqref{eq:invert}}{=} \decoratedgeneric(X, \Omega(\EInProofOfThm_\tuft), Z) %
      + X\Omega(\EInProofOfThm_\tuft) - [\tuft > 0] X Y.
  \end{align*}
  If $\mathcal H$ does not contain the single vertex graph, and
  therefore also no star graph, the computation simplifies and we
  obtain
  \[
    \decoratedgeneric^\tuft(X, Y, Z) =
    \decoratedgeneric(X, \Omega(\EInProofOfThm_\tuft), Z). \qedhere
  \]
\end{proof}

\begin{lemma}\label{lem:Pst}
  Let $\mathcal Q^{\sibling,\tuft}$ be the virtual species
  \[
    \mathcal Q^{\sibling,\tuft}(X,Y,Z)\coloneqq %
    \big(1 + X + \Sets_{2\leq\bullet < 2+\sibling}(Z)\big) %
    \Sets%
    \Big(X\big(\Sets_{<1+\tuft}(Y)\Sets(-X)-1\big)\Big) %
    - 1.
  \]
  Then
  \[
    \Patch^{\sibling}\big(X, \Omega(\EInProofOfThm_\tuft) - X, Z\big) %
    = \mathcal Q^{\sibling,\tuft}(X,Y,Z).
  \]
\end{lemma}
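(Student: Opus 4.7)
The plan is to unfold the definition of $\Patch^{\sibling}$ from \cref{exam: cultivations} and reduce the claim to a single virtual-species identity which then follows directly from the defining property of the combinatorial logarithm $\Omega$. Writing out $\Patch^{\sibling}(X,Y,Z) = (1 + X + \Sets_{2\leq\bullet<2+\sibling}(Z))\cdot\Sets(X\Sets_{\geq 1}(Y)) - 1$ and performing the substitution $Y\mapsto\Omega(\EInProofOfThm_\tuft)-X$, the prefactor and the outer $\Sets(X\,\cdot\,)-1$ already match the structure of $\mathcal Q^{\sibling,\tuft}$ exactly, so it suffices to prove
\[
  \Sets_{\geq 1}\bigl(\Omega(\EInProofOfThm_\tuft) - X\bigr)
  = \Sets_{<1+\tuft}(Y)\Sets(-X) - 1.
\]

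To establish this I would first apply the product rule $\Sets(F+G) = \Sets(F)\Sets(G)$ from \cref{ex: productofsets} with $F = \Omega(\EInProofOfThm_\tuft)$ and $G = -X$, whereupon the display reduces to showing $\Sets(\Omega(\EInProofOfThm_\tuft)) = \Sets_{<1+\tuft}(Y)$. This in turn is immediate from the fact that $\Omega$ is the compositional inverse of $\Sets_{\geq 1}$: indeed, $\Sets_{\geq 1}(\Omega(\EInProofOfThm_\tuft)) = \EInProofOfThm_\tuft$, and then $\Sets(\Omega(\EInProofOfThm_\tuft)) = 1 + \EInProofOfThm_\tuft = 1 + \Sets_{1\leq\bullet<1+\tuft}(Y) = \Sets_{<1+\tuft}(Y)$.

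I do not anticipate any serious obstacle. The only point requiring a moment of care is that $\Omega$ is defined as a $1$-sort compositional inverse, whereas we are substituting a virtual species in the sort $Y$; however, $\EInProofOfThm_\tuft$ vanishes on the empty set and restricts to $Y$ on singletons, so it satisfies precisely the hypothesis needed for $\Omega$ to act legitimately, with $Y$ playing the role of the singleton species. Once this is observed, the entire argument is a formal manipulation of virtual species in three sorts, and the equality of the two sides of the lemma drops out.
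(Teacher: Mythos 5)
Your proof is correct and follows essentially the same route as the paper: both reduce the claim to the identity $\Sets_{\geq 1}\bigl(\Omega(\EInProofOfThm_\tuft)-X\bigr)=\Sets_{<1+\tuft}(Y)\Sets(-X)-1$, which is established via the product rule $\Sets(F+G)=\Sets(F)\Sets(G)$ and the fact that $\Sets\bigl(\Omega(\EInProofOfThm_\tuft)\bigr)=1+\EInProofOfThm_\tuft$. Your additional remark on the legitimacy of substituting the multisort species $\EInProofOfThm_\tuft$ into $\Omega$ is a welcome point of care that the paper leaves implicit.
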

\begin{proof}
  Recall that $\Patch^{\sibling}(X, Y, Z) %
  = \left(1 + X + \Sets_{2\leq\bullet<2+\sibling}(Z)\right) %
  \cdot(\Sets\circ\Tuft) - 1$ with
  $\Tuft(X, Y) = X\Sets_{\geq 1}(Y)$.  Thus, we compute using \cref{ex: productofsets}
  \begin{align*}
    \Sets_{\geq 1}\big(\Omega(\EInProofOfThm_\tuft) - X\big)
    & =\Sets\big(\Omega(\EInProofOfThm_\tuft) - X\big)-1\\
    & =\Sets\big(\Omega(\EInProofOfThm_\tuft)\big)\Sets(-X)-1\\
    & =(1 + \EInProofOfThm_\tuft)\Sets(-X)-1\\
    & =\Sets_{<1+t}(Y)\Sets(-X)-1,
  \end{align*}
  which implies the claim.
\end{proof}

\begin{theorem}\label{thm:enumeration}The species of graphs other than $K_2$ with restricted sibling and tuft number, leaves of sort $Y$, siblings of sort $Z$ and other vertices of sort~$X$ equals
  \[
    \decorated^{\sibling,\tuft}(X, Y, Z) %
    = \comating\circ\mathcal Q^{\sibling,\tuft} %
    + X^2 - [\tuft > 0] X Y - [\sibling > 0] \Sets_2(Z).\\
  \]
\end{theorem}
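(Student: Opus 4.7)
The plan is to chain Lemma \ref{lem:decoratedgeneric}, Corollary \ref{thm:patch-decomposition-siblings} and Lemma \ref{lem:Pst} together via a substitution of virtual species. After the substitutions, the auxiliary bookkeeping terms telescope and the asserted formula drops out.

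First, I would apply Lemma \ref{lem:decoratedgeneric} with $\decoratedgeneric := \decorated^\sibling$. This requires verifying that the set $\mathcal H$ of connected graphs with sibling number at most~$\sibling$ is closed under replacing each tuft by a tuft of arbitrary positive size. The key observation is that a vertex $w$ with an adjacent leaf~$\ell$ cannot have any sibling: any sibling~$w'$ of~$w$ would satisfy $\ell \in \cNhd(w') = \cNhd(w)$, which (since $w'\neq \ell$) forces $\ell$ to have degree at least~$2$. Thus altering the size of a tuft leaves all closed neighborhoods of non-leaf vertices intact, and in particular preserves the sibling number. Since $\decorated^\sibling$ also contains the single vertex graph, Lemma \ref{lem:decoratedgeneric} yields
\[
  \decorated^{\sibling,\tuft}(X, Y, Z) = \decorated^\sibling\bigl(X, \Omega(\EInProofOfThm_\tuft), Z\bigr) + X\,\Omega(\EInProofOfThm_\tuft) - [\tuft > 0]\, X Y.
\]

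Next, I would specialize Corollary \ref{thm:patch-decomposition-siblings} via the substitution $Y \mapsto \Omega(\EInProofOfThm_\tuft) - X$, obtaining
\[
  \decorated^\sibling\bigl(X, \Omega(\EInProofOfThm_\tuft), Z\bigr) = \comating \circ \Patch^{\sibling}\bigl(X, \Omega(\EInProofOfThm_\tuft) - X, Z\bigr) - X\,\Omega(\EInProofOfThm_\tuft) + X^2 - [\sibling > 0]\,\Sets_2(Z).
\]
By Lemma \ref{lem:Pst}, the inner patch expression equals $\mathcal Q^{\sibling,\tuft}(X,Y,Z)$. Substituting back into the earlier display, the two occurrences of $\pm X\,\Omega(\EInProofOfThm_\tuft)$ cancel, leaving the claimed identity. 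The only substantive step is the closure verification for $\mathcal H$; the remainder is purely formal manipulation of virtual species, and I anticipate no further obstacles.
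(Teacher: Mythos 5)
Your proposal is correct and follows essentially the same route as the paper: apply \cref{lem:decoratedgeneric} with $\decoratedgeneric = \decorated^\sibling$, substitute $Y \mapsto \Omega(\EInProofOfThm_\tuft) - X$ in \cref{thm:patch-decomposition-siblings}, and invoke \cref{lem:Pst}, after which the $\pm X\,\Omega(\EInProofOfThm_\tuft)$ terms cancel. Your explicit verification that a vertex adjacent to a leaf has no siblings (so that the closure hypothesis on $\mathcal H$ holds) is a detail the paper leaves implicit, and is a welcome addition.
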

\begin{proof}
  We apply \cref{lem:decoratedgeneric} in the case where $\decoratedgeneric = \decorated^\sibling$ is the species of
  connected graphs with sibling number at most $\sibling$.
  Thus we have to set $Y = \Omega(\EInProofOfThm_\tuft) - X$ in
  \cref{thm:patch-decomposition-siblings}.  The claim then follows by \cref{lem:Pst}.
\end{proof}

We can now prove that the sibling and the tuft number have joint
symmetric distribution on the set of unlabeled connected graphs
with~$n$ vertices.  In fact, we show that the isomorphism type series
of $\decorated^{\sibling,\tuft}(X, X, X)$ is symmetric in
$\sibling$ and $\tuft$.
\begin{lemma}\label{prop:argsym}
  The isomorphism type generating series of the virtual species
  $\mathcal Q^{\sibling,\tuft}(X,X,X)$ is
  \[
    \widetilde{\mathcal Q^{\sibling,\tuft}}(x) =
    \frac{(1-x^{\sibling+2})(1-x^{\tuft +2})}{1-x} - 1.
  \]
  The isomorphism type generating series of the virtual species
  \[
    X^2 - [\tuft > 0] X^2 - [\sibling > 0] \Sets_2(X)
  \]
  is $x^2 - [\tuft > 0] x^2 - [\sibling > 0] x^2$.

  In particular, these generating series are symmetric in $\sibling$
  and $\tuft$.
\end{lemma}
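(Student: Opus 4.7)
The plan is to apply Proposition~\ref{prop:series} to the definition of $\mathcal{Q}^{\sibling,\tuft}$, computing the isomorphism type series of each factor of $\mathcal{Q}^{\sibling,\tuft}(X,X,X)+1$ separately, and to handle the second statement by direct inspection.

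First, I evaluate $\widetilde{1 + X + \Sets_{2\leq\bullet<2+\sibling}(X)}(x)$. Since each $\Sets_k$ contributes a single isomorphism type at size~$k$, this is the truncated geometric sum $1 + x + \dots + x^{\sibling+1} = \frac{1-x^{\sibling+2}}{1-x}$, valid uniformly for $\sibling\geq 0$.

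The main work lies in the composition $\Sets\bigl(X(\Sets_{<1+\tuft}(X)\Sets(-X) - 1)\bigr)$, whose isomorphism type series I expect to reduce to the clean expression $1-x^{\tuft+2}$. The key preliminary input is $\widetilde{\Sets(-X)}(x) = 1-x$, which follows from Example~\ref{ex: productofsets} (giving $\Sets\cdot\Sets(-X) = 1$) and the multiplicativity of $\widetilde{(\cdot)}$. Combined with $\widetilde{\Sets_{<1+\tuft}}(x) = \frac{1-x^{\tuft+1}}{1-x}$, the inner virtual species $G \coloneqq X(\Sets_{<1+\tuft}(X)\Sets(-X) - 1)$ has isomorphism type series
$$\widetilde{G}(x) = x\left(\frac{(1-x^{\tuft+1})(1-x)}{1-x}-1\right) = -x^{\tuft+2}.$$

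The decisive, and most delicate, step is the application of the outer $\Sets$ to this virtual argument. Proposition~\ref{prop:series} prescribes that $\widetilde{\Sets(G)}(x)$ is obtained from $Z_\Sets = \exp\bigl(\sum_{k\geq 1} p_k/k\bigr)$ by substituting $\widetilde{G}(x^k) = -x^{k(\tuft+2)}$ for $p_k$; the resulting sum $-\sum_{k\geq 1}x^{k(\tuft+2)}/k$ is $\log(1-x^{\tuft+2})$, and exponentiation gives $1-x^{\tuft+2}$, as hoped. Multiplying this by the series of the left factor and subtracting~$1$ produces $\frac{(1-x^{\sibling+2})(1-x^{\tuft+2})}{1-x}-1$, manifestly symmetric in $\sibling$ and $\tuft$.

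For the second statement, I observe that $\widetilde{X^2}(x) = x^2$ and $\widetilde{\Sets_2(X)}(x) = x^2$, so the series of the virtual species equals $(1-[\tuft>0]-[\sibling>0])\,x^2$, which is again visibly symmetric. I anticipate no serious obstacle beyond verifying that the plethystic substitution is legitimate on a virtual argument with negative coefficients, which is standard in the theory and already reflected in the formulation of Proposition~\ref{prop:series}.
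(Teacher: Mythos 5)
Your proposal is correct and follows essentially the same route as the paper: compute the series of the left factor as a truncated geometric sum, establish $\widetilde{\Sets(-X)}(x)=1-x$, find $-x^{\tuft+2}$ for the inner argument, and evaluate the outer $\Sets$ via the plethystic substitution $p_k\mapsto -x^{k(\tuft+2)}$ in $Z_\Sets$ to obtain $1-x^{\tuft+2}$. The only cosmetic difference is that you justify $\widetilde{\Sets(-X)}(x)=1-x$ via the product identity of \cref{ex: productofsets} rather than by directly inverting $\widetilde{\Sets}(x)$, which amounts to the same thing.
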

\begin{proof}
  The first factor in $\mathcal Q^{s,t}(X,X,X)$ is
  $1+X+\Sets_{2\leq\bullet < 2+\sibling}(X) = \Sets_{<2+\sibling}(X)$
  and its isomorphism type generating series is
  \[
    \widetilde{\Sets_{<2+\sibling}}(x) = (1-x^{\sibling+2})/(1-x).
  \]
  To compute the isomorphism type generating series of the second
  factor, we first observe that
    \[
      \widetilde{\Sets(-X)}(x) %
      = \frac{1}{\widetilde{\Sets(X)}(x)} = 1 - x.
    \]
    Therefore, setting
    $\mathcal F^\tuft \coloneqq X(\Sets_{<1+\tuft}(X)\Sets(-X)-1)$ we
    find
    \[
      \widetilde{\mathcal F^\tuft}(x) %
      = x\left(\frac{1-x^{\tuft+1}}{1-x}\cdot (1-x)-1\right) %
      =-x^{\tuft+2}.
    \]
    Finally, we recall that the cycle index series of $\Sets$
    equals
    $Z_{\Sets} = \exp\left(\sum_{k>0}\frac{p_k}{k}\right)$
    and obtain, substituting $p_k=-x^{k(t+2)}$, in accordance with \Cref{prop:series},
    \[
      \widetilde{\Sets(\mathcal F^\tuft)}(x) %
      = \exp\left(-\sum_{k>0}\frac{x^{k(t+2)}}{k}\right) = 1-x^{t+2},
    \]
    which finishes the computation of
    $\widetilde{\mathcal Q^{\sibling,\tuft}}(x)$.
\end{proof}

\begin{proof}[Proof of \cref{thm:main}]
  By \cref{thm:enumeration} and \cref{prop:argsym}, the isomorphism
  type generating series of $\decorated^{\sibling,\tuft}(X,X,X)$
  is symmetric in $\sibling$ and $\tuft$.

  Since the species of graphs with sibling number $\sibling$ and
  tuft number $\tuft$ equals
  \[
    \decorated^{\sibling,\tuft} - \decorated^{\sibling-1,\tuft}
    - \decorated^{\sibling,\tuft - 1} +
    \decorated^{\sibling-1,\tuft-1},
  \]
  the claim follows.
\end{proof}
\clearpage
\section{The Reduction of a Graph}
\label{sec:reduction}
In this section we show that the reduction of a graph, as defined in
the introduction, and the reduction of the same graph with some
leaves removed coincide.  In particular, the reduction of the
co-mating graph equals the original graph.

In fact, we will prove a slight generalization of this statement.
Let us define the following two maps.  Given a graph $G$ other than
$K_2$, and a set $S$ of leaves of $G$, let $\lambda_S(G)$ be the
graph obtained from $G$ by removing $S$.  For convenience, $\lambda$
without subscript denotes the map removing all leaves of $G$.
Moreover, let $\rho(G)$ be the graph obtained from $G$ by contracting
all groups of siblings to a single vertices.  Note that $\rho(G)$ cannot be isomorphic to $K_2$.

\begin{theorem}\label{thm:confluence}
  Given a graph $G$ and a set of leaves $U$ of $G$, there exists a set
  of leaves $V$ of $\rho\circ\lambda(G)$ such
  that
  \[
    \rho\circ\lambda\circ \rho\circ \lambda_U(G) %
    =\rho\circ\lambda_V\circ\rho\circ\lambda(G).
  \]
\end{theorem}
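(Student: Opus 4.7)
The plan is to construct the set $V$ explicitly via a natural map between the vertex sets of $\rho\lambda_U(G)$ and $\rho\lambda(G)$, and then to verify the equation by tracking sibling classes and leaves through the four operations.

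I would first establish two structural facts. \emph{Monotonicity of the sibling relation under leaf removal}: if $\cNhd_G(v_1)\setminus U_1=\cNhd_G(v_2)\setminus U_1$, then the same equality holds with $U_1$ replaced by any $U_2\supseteq U_1$; consequently the sibling partition of $\lambda(G)$ coarsens the restriction to $V(\lambda(G))$ of the sibling partition of $\lambda_U(G)$. \emph{Leaves have no siblings}: in any connected graph other than $K_2$, a leaf $\ell$ cannot be a sibling of another vertex, because $\cNhd(\ell)=\{\ell,w\}$ (with $w$ the unique neighbor of $\ell$) forces any sibling to equal $\ell$ or $w$, both impossible. In particular, each vertex in $L(G)\setminus U$ forms a singleton sibling class in $\lambda_U(G)$ and is a leaf of $\rho\lambda_U(G)$.

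These facts yield a well-defined surjection $\pi$ from those sibling classes of $\lambda_U(G)$ that are entirely contained in $V(\lambda(G))$ onto the sibling classes of $\lambda(G)$, sending each such class to the larger sibling class of $\lambda(G)$ containing it. The classes outside the domain of $\pi$ are precisely the singletons $\{\ell\}$ for $\ell\in L(G)\setminus U$, all of which are leaves of $\rho\lambda_U(G)$. I would then define $V$ to be the image under $\pi$ of those leaves of $\rho\lambda_U(G)$ in the domain of $\pi$ whose unique neighbor class in $\rho\lambda_U(G)$ does \emph{not} lie in the same $\pi$-fiber, so that the image is still a leaf of $\rho\lambda(G)$. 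A careful analysis via $\pi$ then shows that $V(\rho\lambda\rho\lambda_U(G))$ and $V(\rho\lambda_V\rho\lambda(G))$ are both in natural bijection with $V(\rho\lambda(G))\setminus V$, with the same induced edge sets.

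The main obstacle is that $\pi$ is generally not injective: removing the additional leaves in $L(G)\setminus U$ can merge distinct sibling classes of $\lambda_U(G)$ into a single sibling class of $\lambda(G)$. A triangle with a pendant on each vertex, with $U$ consisting of a single pendant, is illustrative: the three triangle vertices form three separate classes in $\rho\lambda_U(G)$ but merge into a single class in $\rho\lambda(G)$, so $\lambda\rho\lambda_U(G)=K_3$ while $\lambda_V\rho\lambda(G)$ is a single vertex, and only the outer $\rho$ contracts $K_3$ to a single vertex to reconcile the two sides. Proving in general that the outer $\rho$ performs exactly the collapses induced by $\pi$ — and that $V$ was defined precisely so that the right-hand side matches after this reconciliation — is the technical heart of the argument. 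A small separate verification is needed for the $K_2$ exception in the definition of the reduction.
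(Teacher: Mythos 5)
There is a genuine gap: the identity itself is never proved. Your preliminaries are sound (monotonicity of the sibling relation under removing additional leaves, the fact that leaves have no siblings outside $K_2$, the induced partial surjection $\pi$ between sibling classes), and your candidate for $V$ appears to agree with the one the paper uses (the leaves of $\rho\circ\lambda_U(G)$ other than the surviving leaves of $G$). But the assertion that ``a careful analysis via $\pi$ then shows'' the two sides agree is exactly the statement to be proved, and you yourself defer it as ``the technical heart of the argument.'' Moreover, the intermediate claim you would rest this on --- that $V(\rho\circ\lambda\circ\rho\circ\lambda_U(G))$ and $V(\rho\circ\lambda_V\circ\rho\circ\lambda(G))$ are both in natural bijection with $V(\rho\circ\lambda(G))\setminus V$ with the same edges --- is false. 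Take $G$ to be the path on five vertices $a,b,c,d,e$ with $U=\{a\}$: then $\rho\circ\lambda(G)$ is the path $b,c,d$ and $V=\{b\}$, so $V(\rho\circ\lambda(G))\setminus V=\{c,d\}$ has two elements, yet both sides of the identity are a single vertex, because the outer $\rho$ must still contract the remaining $K_2$. This is the same phenomenon as in your triangle example, and it contradicts your step as written. The step would have to be restated as ``both sides are the same quotient of the graph induced on $V(\rho\circc\lambda(G))\setminus V$,'' and showing that the two deferred contraction processes yield the same quotient is precisely the missing argument; tracking this through four alternating applications of $\rho$ and $\lambda$ simultaneously is what makes the direct approach hard to close.

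For comparison, the paper sidesteps the global bookkeeping by first proving a one-step commutation lemma: for any graph $H\neq K_2$ whose leaf set is partitioned as $L\sqcup L^c$, one has $\rho\circ\lambda(H)=\rho\circ\lambda_L\circ\rho\circ\lambda_{L^c}(H)$. Its proof is a purely local analysis of a single maximal sibling class $S$ of $\lambda_L(\lambda_{L^c}(H))$, split into the subset $S_0$ of vertices that are already siblings before removing $L$ and the remainder $P_0$, checking that both orders of operations contract $S_0\cup P_0$ to one vertex with the same neighborhood. The theorem then follows by two formal applications of this lemma (once to $G$ with $L=U^c$, once to $\rho\circ\lambda_U(G)$ with $L=V$). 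If you wish to salvage your route, isolating and proving such a single-step commutation statement first would localize exactly the difficulty you identified.
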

One can actually show that the reduction of a graph can also be
obtained by repeatedly cutting off any single leaf from a graph other
than $K_2$ and reducing the size of any group of siblings by one, in
any order.  However, we do not need this statement, so we content
ourselves with the above.

\cref{thm:confluence} is a simple consequence of the following lemma.
\begin{lemma}\label{lem:confluence}
  Let $G$ be any graph other than $K_2$, let $L$ be a subset of its
  leaves and let $L^c$ be the remaining set of leaves of $G$.  Then
  \[
    \rho\circ\lambda(G) %
    =\rho\circ\lambda_L\circ\rho\circ\lambda_{L^c}(G).
  \]
\end{lemma}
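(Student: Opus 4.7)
The plan is to reduce the lemma to a general identity about an arbitrary graph $H$ and a subset $L$ of its leaves. Set $H\coloneqq\lambda_{L^c}(G)$. Since $G\neq K_2$, every leaf of $G$ has a unique non-leaf neighbor, so each $\ell\in L$ retains its unique neighbor when $L^c$ is removed; hence $L$ is a set of leaves of $H$ and $\lambda(G)=\lambda_L(H)$. The identity to prove therefore becomes
\[
\rho\lambda_L(H)=\rho\lambda_L\rho(H),
\]
which I will establish for any graph $H$ and subset $L$ of its leaves. Degenerate situations where $H$, $\lambda_L(H)$, or $\rho(H)$ equals $K_2$ (arising, for instance, when $G$ is a path on three vertices or a double broom) are handled by direct inspection.

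The structural fact driving the proof is that, in any graph other than $K_2$, no leaf is a sibling of another vertex: a leaf has closed neighborhood of size~$2$, any non-leaf in such a graph has closed neighborhood of size at least~$3$, and two distinct leaves have distinct closed neighborhoods (each contains itself). Consequently, $\rho$ never contracts a leaf; in particular $L$ remains a set of leaves of $\rho(H)$, so the right-hand side is well defined. A second direct consequence is the compatibility claim that sibling classes of $\lambda_L(H)$ are unions of sibling classes of $H$: if $u,v$ are siblings in $H$, then $u,v\notin L$, and the equality $\cNhd_H(u)=\cNhd_H(v)$ descends to $\cNhd_{\lambda_L(H)}(u)=\cNhd_{\lambda_L(H)}(v)$.

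With these ingredients both sides are identified with the same quotient. Let $\Pi$ denote the partition of $V(H)\setminus L$ into sibling classes of $\lambda_L(H)$. On the left, $\rho$ collapses each block of $\Pi$ in a single step. On the right, $\rho$ first collapses the finer partition of $V(H)$ into sibling classes of $H$ (none of which meet $L$); the subsequent $\lambda_L$ then removes $L$; and the outer $\rho$ merges exactly those blocks of the finer partition that coalesce into the same block of $\Pi$. The main obstacle is to verify that this outer $\rho$ performs the correct merging: given sibling classes $C,C'$ of $H$ whose union lies in a single block of $\Pi$, one must check that their contracted images in $\rho(H)$ are siblings in $\lambda_L\rho(H)$. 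This is a routine computation of closed neighborhoods modulo $C\cup C'$ and $L$, using that every vertex of $C$ is adjacent in $H$ to every vertex of $C'$ (since they are mutually adjacent as siblings in $\lambda_L(H)$) and that their remaining neighbors outside $C\cup C'$ coincide modulo $L$.
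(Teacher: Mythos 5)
Your proposal is correct and follows essentially the same route as the paper: both reduce the statement to the identity $\rho\circ\lambda_L(H)=\rho\circ\lambda_L\circ\rho(H)$ for $H=\lambda_{L^c}(G)$, observe that leaves are never siblings (so $\rho$ fixes $L$ and sibling classes of $\lambda_L(H)$ are unions of sibling classes of $H$), and then check that contracting in two stages yields the same quotient; the paper phrases this last step locally via a single maximal sibling group $S=S_0\cup P_0$ and its outside neighbors $N_0$, while you phrase it globally via the partition $\Pi$, but the verification is the same closed-neighborhood computation in both cases. Your explicit handling of the degenerate $K_2$ intermediate cases is a small point the paper glosses over.
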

\begin{proof}
  Let $H = \lambda_{L^c}(G)$.  Then $L$ is the set of all leaves of
  $H$.  Since $\lambda(G) = \lambda_L\circ\lambda_{L^c}(G)$, it
  suffices to show that
  $\rho\circ\lambda_L(H) = \rho\circ\lambda_L\circ\rho(H)$.

  Consider a maximal group~$S$ of siblings in $\lambda_L(H)$, as
  vertices of~$H$.  Let~$S_0$ be the set of vertices in~$S$ which are
  also siblings in~$H$ and let~$P_0$ be the remaining vertices
  in~$S$.  The vertices in~$P_0$ must be adjacent to leaves in~$H$.
  Finally, let $N_0$ be the set of neighbors of~$S_0$ other than
  vertices in $S_0\cup P_0$.

  Applying~$\rho$ to $\lambda_L(H)$, the vertices in~$S$ are contracted to a single
  vertex~$s_0$.  Additionally, we let~$N_1$ be the vertices
  corresponding to~$N_0$ in $\rho\circ\lambda_L(H)$.

  Thus, the corresponding parts of~$H$ and~$\rho\circ\lambda_L(H)$
  might be visualized as follows.
  \[
    \begin{tikzpicture}[scale=0.75,baseline=0]
      \node[circle, style=draw] (P0) at (0,2) {$P_0$};
      \node[circle, style=draw] (S0) at (4,2) {$S_0$};
      \node[circle, style=draw] (N0) at (2,0) {$N_0$};
      \node[circle, style=draw] (L0)
      at (0,3.5) {};
      \node[circle,style=draw] (L1)
      at (1.0606,3.0606) {};
      \node[circle,style=draw] (L2)
      at (-1.0606,3.0606) {};
      \draw (P0)--(L0);
      \draw (P0)--(L1);
      \draw (P0)--(L2);
      \draw[thick] (P0)--(N0)--(S0)--(P0);
      \draw ([shift=(120:5)]2,-5) arc (120:100:5);
      \draw ([shift=(80:5)]2,-5) arc (80:60:5);
    \end{tikzpicture}
    \qquad\mapsto\qquad
    \begin{tikzpicture}[scale=0.75,baseline=0]
      \node[circle, style=draw] (S0) at (2,2) {};
      \node[] at (2,2.5) {$s_0$};
      \node[circle, style=draw] (N1) at (2,0) {$N_1$};
      \draw[thick] (S0)--(N1);
      \draw ([shift=(120:5)]2,-5) arc (120:100:5);
      \draw ([shift=(80:5)]2,-5) arc (80:60:5);
    \end{tikzpicture}
  \]
  We now compare this to the effect of $\rho\circ\lambda_L\circ\rho$
  on the vertices corresponding to~$S$.  More precisely, we show that
  the set of vertices in $S_0\cup P_0$ is mapped to a single vertex
  $s_0$, which is adjacent to all vertices in the image of~$N_0$.

  First,~$\rho$ contracts the vertices in~$S_0$ to a single
  vertex~$s_1$.  Then,~$\lambda_L$ removes the leaves adjacent to the
  vertices in~$P_0$.  As a result, the vertices in~$P_0$ are siblings
  of $s_1$ in $\lambda_L\circ\rho(H)$.  Thus,~$\rho$ contracts the
  vertices in~$P_0$ and the vertex~$s_1$ to a single vertex $s_0$.
\end{proof}

\begin{proof}[Proof of \cref{thm:confluence}]
  Let $U^c$ be the complement of $U$ in the set of leaves of $G$.
  Observe that, for any graph $H$, a leaf of $H$ corresponds to a
  leaf in $\rho(H)$, because a leaf cannot be adjacent to a vertex
  having siblings.  It follows that $U^c$ is a subset of the leaves
  of $\rho\circ \lambda_U(G)$.  Let $V$ be the complement of $U^c$ in
  this set of leaves.

  We now apply \cref{lem:confluence} twice.  First we apply it with
  $L=V$ and $L^c=U^c$ to $\rho\circ\lambda_U(G)$ and then with
  $L=U^c$ and $L^c=U$ to $G$:
  \begin{align*}
    \rho\circ\lambda\circ\rho\circ\lambda_U(G) %
    & = \rho\circ\lambda_V\circ\rho\circ\lambda_{U^c}\circ\rho\circ\lambda_U(G) \\
    & = \rho\circ\lambda_V\circ\rho\circ\lambda(G).\qedhere
  \end{align*}
\end{proof}

\begin{corollary}\label{cor:confluence}
  Let $U$ be a subset of leaves of a graph $G$.  Then
  $\rho\circ\lambda_U(G)$ and $G$ have the same reduction.
\end{corollary}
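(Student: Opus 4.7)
The plan is to iterate \cref{thm:confluence}. Write $\phi = \rho\circ\lambda$ for brevity; any application of $\phi$ that is not the identity strictly decreases the number of vertices, so the reduction $r(H)$ of a graph $H$ equals $\phi^k(H)$ for every sufficiently large~$k$.

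The main step I would carry out is to prove by induction on $k\ge 1$ that there exists a set $V_k$ of leaves of $\phi^k(G)$ such that
\[
  \phi^k\bigl(\rho\circ\lambda_U(G)\bigr) = \rho\circ\lambda_{V_k}\bigl(\phi^k(G)\bigr).
\]
The base case $k=1$ is exactly \cref{thm:confluence}, which also supplies $V_1$. For the step from~$k$ to~$k+1$, I would apply $\phi$ to both sides of the identity at level~$k$ and then invoke \cref{thm:confluence} once more, this time with $G$ replaced by $\phi^k(G)$ and $U$ replaced by $V_k$; this produces the required set $V_{k+1}$ of leaves of $\phi^{k+1}(G)$.

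To conclude, I would take $k$ large enough that $\phi^k(G) = r(G)$. Since $r(G)$ has no leaves, $V_k$ is necessarily empty and $\lambda_{V_k}$ acts as the identity; since $r(G)$ also has no siblings, $\rho$ fixes it. Hence $\phi^k(\rho\circ\lambda_U(G)) = r(G)$, and because $r(G)$ is a fixed point of $\phi$, further iterations leave it unchanged. Thus the reduction of $\rho\circ\lambda_U(G)$ coincides with $r(G)$.

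The only delicate point I anticipate is verifying that every iterate remains in the domain where \cref{thm:confluence} can be invoked, that is, that no intermediate graph equals $K_2$. This should be dispatched by the same conventions implicit in the proof of \cref{thm:confluence}: should a $K_2$ appear as some $\phi^i(G)$, a single application of $\rho$ collapses it to $K_1$ on both sides of the inductive identity, and the argument proceeds unchanged.
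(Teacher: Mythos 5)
Your proposal is correct and follows essentially the same route as the paper: both proofs iterate \cref{thm:confluence} until the reduction is reached, the paper phrasing this as an induction on the minimal $n$ with $(\rho\circ\lambda)^n(G)$ reduced while you maintain the explicit invariant $\phi^k(\rho\circ\lambda_U(G))=\rho\circ\lambda_{V_k}(\phi^k(G))$. Your worry about $K_2$ is moot for a simpler reason than the one you give: every intermediate graph in the iteration is an image of $\rho$, and $\rho(H)$ is never isomorphic to $K_2$.
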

\begin{proof}
  We induct on the minimal integer $n$ such that
  $(\rho\circ\lambda)^n(G)$ is reduced.  The base case $n=0$, in
  which $U$ is necessarily empty, is trivial.

  Suppose that $(\rho\circ\lambda)^{n+1}(G)$ is reduced, with $n$
  minimal.  Let $G' = \rho\circ\lambda(G)$.  By induction,
  $\rho\circ\lambda_V(G')$ and $G'$ have the same reduction for any
  set of leaves~$V$ of~$G'$.

  By \cref{thm:confluence}, there exists a set of leaves $V$ of
  $G'=\rho\circ\lambda(G)$ such that
  \[
    \rho\circ\lambda\circ\rho\circ\lambda_U(G) =
    \rho\circ\lambda_V(G').
  \]
  Thus, denoting the reduction of a graph $G$ with $R(G)$, we have
  \[
    R\big(\rho\circ\lambda\circ\rho\circ\lambda_U(G)\big) %
    = R\big(\rho\circ\lambda_V(G')\big) = R(G') = R(G). \qedhere
  \]
\end{proof}

\section{Refined Symmetry of the Sibling and the Tuft number}
\label{sec:symmetry-refined}

In this section we prove \cref{thm:R} by further refining the patch decomposition from
\cref{thm:patch-decomposition} and \cref{thm:patch-decomposition-siblings}.

Let $R$ be a connected graph without siblings and without leaves.
Let $\decorated^{R, \sibling}$, $\decorated^{R, \sibling, \tuft}$ and
$\comating^R$ be the subspecies of $\decorated^{\sibling}$,
$\decorated^{\sibling, \tuft}$ and $\comating$ respectively, whose
structures, regarded as graphs, reduce to a graph isomorphic to~$R$.

\begin{corollary}\label{thm:patch-decomposition-siblings-refined}
  For the single vertex graph $\bullet$ we have
  \begin{align*}
    \decorated^{\bullet,\sibling}(X, X+Y, Z) %
    &= \comating^\bullet\circ\Patch^{\sibling} %
      - XY - [\sibling > 0] \Sets_2(Z),
      \intertext{whereas, for $R\neq\bullet$, we have}\notag
      \decorated^{R,\sibling}(X, X+Y, Z) &= \comating^R\circ\Patch^{\sibling}.
  \end{align*}
\end{corollary}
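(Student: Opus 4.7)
The plan is to refine the bijection of \cref{thm:patch-decomposition-siblings} by tracking the reduction of each structure. Recall that this bijection assigns to a $\decorated^\sibling(X, X+Y, Z)$-structure $G$ a triple $(\pi, M, (P_B)_{B \in \pi})$, where the underlying co-mating graph $M$ is defined as $\rho \circ \lambda_U(G)$ with $U$ the set of sort-$Y$ leaves of $G$.

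The essential ingredient is the observation that $G$ and $M$ have the same reduction. Since $U$ is a subset of the set of all leaves of $G$, this is immediate from \cref{cor:confluence}. Consequently, restricting the bijection to those $G$ whose reduction is isomorphic to $R$ corresponds on the right-hand side to requiring $M \in \comating^R$, with the patches $P_B$ unconstrained; this yields $\comating^R \circ \Patch^\sibling$.

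It then remains to account for the exceptional $(\comating \circ \Patch^\sibling)$-structures identified in the proof of \cref{thm:patch-decomposition-siblings} and illustrated in \cref{fig:exception}: the single tuft (contributing $XY$) and, when $\sibling > 0$, the pair of sort-$Z$ vertices (contributing $\Sets_2(Z)$). In both, the underlying co-mating graph $M$ is the single vertex $\bullet$, which, being leafless and sibling-free, is its own reduction. Thus these exceptional structures lie in $\comating^\bullet \circ \Patch^\sibling$ but correspond to no element of $\decorated$, which produces precisely the subtracted terms in the case $R = \bullet$. For $R \neq \bullet$, the co-mating graph $M$ has at least two vertices and the exceptions cannot arise, so no correction is required.

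I do not anticipate a substantial obstacle, since the essential content has already been established in \cref{thm:patch-decomposition-siblings} and in \cref{sec:reduction}. The one point requiring care is the bookkeeping of the exceptions: one must verify that they contribute only when $R = \bullet$, which follows by direct inspection of their underlying co-mating graph.
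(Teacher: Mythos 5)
Your proposal is correct and follows essentially the same route as the paper: both rest on the bijection of \cref{thm:patch-decomposition-siblings} together with \cref{cor:confluence} to conclude that $G$ and its underlying \comatinggraph $M$ have the same reduction, and both dispose of the exceptional size-two structures by noting they can only occur when the reduction is the single vertex. Your handling of the exceptions via their underlying \comatinggraph is a slightly more explicit version of the paper's remark that $G$ has at least four vertices when $R\neq\bullet$.
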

\begin{proof}
  By \cref{thm:patch-decomposition-siblings}, the bijection from
  \cref{thm:patch-decomposition} sending a graph~$G$ to a triple
  $\left(\pi, M, \left(P_B\right)_{B\in\pi}\right)$ preserves the
  sibling number.  Since~$M$ is the \comatinggraph of~$G$,
  \cref{cor:confluence} implies that~$G$ reduces to~$R$ if and only
  if~$M$ reduces to~$R$.

  If $R$ is not the single vertex graph, $G$ has at least~$4$
  vertices, so the exceptional cases do not occur.
\end{proof}

\begin{theorem}\label{thm:enumeration-refined}
  For the single vertex graph $\bullet$, we have
  \begin{align*}
    \decorated^{\bullet,\sibling,\tuft}(X, Y, Z) %
    &= \comating^\bullet\circ\mathcal Q^{\sibling,\tuft} %
    + X^2 - [\tuft > 0] X Y - [\sibling > 0] \Sets_2(Z),\\
      \intertext{whereas, if $R\neq\bullet$, we have}
    \decorated^{R,\sibling,\tuft}(X, Y, Z) %
    &= \comating^R\circ\mathcal Q^{\sibling,\tuft}.
  \end{align*}
\end{theorem}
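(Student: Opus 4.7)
The proof will parallel that of \cref{thm:enumeration}, with $\decorated^{\sibling}$ replaced by $\decorated^{R, \sibling}$ and $\comating$ replaced by $\comating^R$ throughout. The key verification is that $\decorated^{R, \sibling}$ fits the framework of \cref{lem:decoratedgeneric}: the class of connected graphs, other than $K_2$, that reduce to $R$ and have sibling number at most $\sibling$ must be closed under replacing, at each vertex, the set of adjacent leaves by any positive number of leaves. The reduction is unaffected because the very first step of $\rho\circ\lambda$ strips all leaves, so the number of leaves attached to each vertex is immaterial. The sibling number is also preserved: in any graph other than $K_2$, distinct siblings must be adjacent non-leaves, and a vertex adjacent to a leaf cannot have a sibling other than itself, since a hypothetical sibling would have to be adjacent to the pendant leaf, forcing that leaf to have degree at least two. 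Consequently, groups of siblings are disjoint from the tufts, and modifying the leaf count within any tuft leaves them unchanged.

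Once the closure property is in place, the argument reduces to chaining the earlier results. First, apply \cref{lem:decoratedgeneric} to $\decorated^{R, \sibling}$. Second, invoke the refined patch decomposition \cref{thm:patch-decomposition-siblings-refined} with $Y$ replaced by $\Omega(\EInProofOfThm_\tuft) - X$. Third, use \cref{lem:Pst} to rewrite $\Patch^{\sibling}(X, \Omega(\EInProofOfThm_\tuft) - X, Z)$ as $\mathcal Q^{\sibling, \tuft}(X, Y, Z)$. The case distinction in \cref{lem:decoratedgeneric} aligns precisely with the case distinction in \cref{thm:patch-decomposition-siblings-refined}: the relevant class contains the single vertex graph if and only if $R = \bullet$. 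For $R = \bullet$, the correction $+X\Omega(\EInProofOfThm_\tuft) - [\tuft > 0]XY$ from \cref{lem:decoratedgeneric} combines with the correction $-X(\Omega(\EInProofOfThm_\tuft) - X) - [\sibling > 0]\Sets_2(Z)$ from \cref{thm:patch-decomposition-siblings-refined}; the $X\Omega(\EInProofOfThm_\tuft)$ terms cancel, leaving precisely $X^2 - [\tuft > 0]XY - [\sibling > 0]\Sets_2(Z)$. For $R \neq \bullet$ both sets of correction terms vanish, and one obtains $\decorated^{R, \sibling, \tuft}(X, Y, Z) = \comating^R \circ \mathcal Q^{\sibling, \tuft}$.

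The main obstacle is the closure verification, specifically the sibling-number invariance under changing leaf multiplicities. Beyond that, the remainder is a direct transcription of the corresponding derivation in the proof of \cref{thm:enumeration}, with the species $\decorated^\sibling$, $\decorated^{\sibling, \tuft}$, and $\comating$ uniformly replaced by their refinements to graphs reducing to $R$.
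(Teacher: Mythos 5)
Your proposal is correct and follows essentially the same route as the paper: apply \cref{lem:decoratedgeneric} with $\mathcal H$ the connected graphs of sibling number at most~$\sibling$ reducing to~$R$, substitute $Y=\Omega(\EInProofOfThm_\tuft)-X$ in \cref{thm:patch-decomposition-siblings-refined}, and finish with \cref{lem:Pst}. You additionally spell out the closure hypothesis of \cref{lem:decoratedgeneric} and the cancellation of the correction terms, both of which the paper leaves implicit; these verifications are accurate.
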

\begin{proof}
  We apply \cref{lem:decoratedgeneric} in the case where $\mathcal H$
  are the connected graphs with sibling number at most $\sibling$
  which reduce to $R$.  Thus, after setting
  $Y = \Omega(\EInProofOfThm_\tuft) - X$ in
  \cref{thm:patch-decomposition-siblings-refined}, the claim follows from
  \cref{lem:Pst}.
\end{proof}

\begin{proof}[Proof of \cref{thm:R}]
  This follows from \cref{thm:enumeration-refined} in the same way as
  \cref{thm:main} follows from \cref{thm:enumeration}.
\end{proof}

\section{Induced Cycles and Graph Reduction}\label{sec:graph-classes}
In this section we show that induced cycles on at least~$4$ vertices
are essentially preserved by reduction.
\begin{theorem}
  For $n\geq 4$, a graph $G$ contains an induced $n$-cycle if and
  only if the reduction of $G$ contains an induced $n$-cycle.

  In particular, the sibling number and the tuft number have joint
  symmetric distribution on the set of connected chordal graphs,
  i.e., graphs which do not contain any induced cycle on~$4$ or more
  vertices.
\end{theorem}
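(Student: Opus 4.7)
The plan is to show separately that each of the two elementary reduction operations $\lambda$ (removing all leaves) and $\rho$ (contracting each group of siblings to a single vertex) both preserves and reflects induced $n$-cycles, for every $n\geq 4$. Since the reduction $R(G)$ is obtained by finitely many applications of $\rho\circ\lambda$, the first assertion of the theorem will then follow by induction on the number of iterations.

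The $\lambda$ step is immediate: any vertex on an induced cycle has degree at least two, so no cycle vertex is ever deleted, and the induced subgraph on a fixed set of non-leaves is unchanged by removing leaves. The $\rho$ step is the heart of the argument. I will first note that two siblings are necessarily adjacent, since $u\in\cNhd(u)=\cNhd(v)$ forces $u$ adjacent to $v$ when $u\neq v$. Given an induced cycle $C=v_1\cdots v_n$ with $n\geq 4$, two consecutive cycle vertices $v_i,v_{i+1}$ cannot be siblings: $\cNhd(v_i)$ contains $v_{i-1}$ but not $v_{i+2}$, while $\cNhd(v_{i+1})$ contains $v_{i+2}$ but not $v_{i-1}$. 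Here I use both that $C$ is induced and that $n\geq 4$ to ensure that $v_{i-1}$ and $v_{i+2}$ are distinct and are non-neighbors of $v_{i+1}$ and $v_i$ respectively. Consequently every sibling group meets $V(C)$ in at most one vertex, so the images of the $v_i$ under $\rho$ are $n$ distinct vertices of $\rho(G)$. Since the members of a sibling group share their external neighbors, adjacency between two such images in $\rho(G)$ is equivalent to adjacency between the corresponding cycle vertices in $G$; this immediately produces an induced $n$-cycle in $\rho(G)$. The same observation, applied to arbitrarily chosen representatives in $G$ of the vertices of an induced $n$-cycle in $\rho(G)$, gives the reverse implication.

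The second assertion, concerning chordal graphs, will then follow by combining the first part with \cref{thm:R}. Chordality is precisely the absence of induced cycles of length at least four, so $G$ is chordal if and only if $R(G)$ is chordal. Hence the set of unlabeled connected chordal graphs on $n$ vertices decomposes as the disjoint union, over connected leafless chordal graphs $R$ without siblings, of the classes of graphs reducing to $R$. On each such class \cref{thm:R} supplies the joint symmetric distribution of $\sibling$ and $\tuft$, and this symmetry is inherited by the finite disjoint union.

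I expect the only delicate point to be ruling out, in the $\rho$ step, that contracting a sibling group disjoint from $V(C)$ might introduce a new chord among the cycle vertices. This is handled by the observation that adjacency in $\rho(G)$ between images of two cycle vertices is equivalent to adjacency of the cycle vertices themselves in $G$, precisely because siblings share their external neighborhoods.
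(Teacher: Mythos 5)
Your proposal is correct and follows essentially the same route as the paper: leaves cannot lie on an induced cycle, and a sibling group meets an induced $n$-cycle ($n\geq 4$) in at most one vertex because siblings are adjacent while consecutive cycle vertices have distinct closed neighborhoods, so contraction carries induced cycles to induced cycles. You are somewhat more explicit than the paper in two places---spelling out the reverse implication by lifting representatives, and deriving the chordal statement by partitioning chordal graphs into reduction classes and applying the refined symmetry theorem to each---but these are exactly the steps the paper leaves implicit, not a different argument.
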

\begin{proof}
  We show that induced cycles are preserved when removing leaves and
  when contracting any maximal group of siblings to a single vertex.
  The former is true, because an induced cycle of a graph cannot
  contain a leaf.

  To prove the latter, we first show that a group of siblings~$S$ and
  an induced $n$-cycle~$C$ with $n\geq 4$ can have at most one vertex
  in common.  Let~$v$ be such a vertex and let~$u$ and~$w$ be its
  neighbors on the induced $n$-cycle.  Because $n\geq 4$ the
  vertices~$u$ and~$w$ cannot be adjacent.  Thus, neither of them can
  be a sibling of~$v$.

  Let $G'$ be the graph obtained by contracting the vertices in a
  maximal group of siblings $S$ to a single vertex.  If $S$ and $C$
  have no vertices in common, $C$ is also an induced cycle of $G'$.
  If $S$ and $C$ have a single vertex $v$ in common, contracting the
  group of siblings $S$ to a single vertex $v'$, the vertices of
  $C\setminus\{v\}\cup\{v'\}$ form an induced cycle of $G'$.

\end{proof}

\section{Co-mating Graphs Reducing to a Given Graph}\label{sec:Description of Comating}

In this section we show that the species $\comating^R$ of
\comatinggraphs which reduce to a given leafless graph without
siblings $R$ can be obtained from the species $\comating^\bullet$ of
\comatinggraphs which reduce to the single vertex graph with little
computational effort.

Additionally, we provide the first few terms of the molecular
decompositions of the species $\comating^\bullet$ and the species of
leafless graphs without siblings $\decorated(X,0,0)$.

\begin{proposition}\label{prop:comating}
  Let $R$ be a connected graph without siblings and without leaves
  and let $\mathcal R$ be the species whose unique isomorphism type
  is $R$. Then
  \begin{align*}
    \comating^\bullet &= (X-X^2)\circ\Patch(X, -X, 0)^{(-1)}\\
    \intertext{whereas, if $R\neq\bullet$,}
    \comating^R &= \mathcal R\circ\Patch(X, -X, 0)^{(-1)}. %
  \end{align*}
\end{proposition}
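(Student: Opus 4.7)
The plan is to derive the two identities by specializing the refined patch decomposition from \cref{thm:patch-decomposition-siblings-refined} at $Y=-X$ and $Z=0$, and then inverting $\Patch(X,-X,0)$ compositionally.

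First, I will take \cref{thm:patch-decomposition-siblings-refined} and let $\sibling\to\infty$ (equivalently, drop the sibling bound), obtaining
\[
\decorated^\bullet(X,X+Y,Z)=\comating^\bullet\circ\Patch-XY-\Sets_2(Z),\qquad
\decorated^R(X,X+Y,Z)=\comating^R\circ\Patch
\]
for $R\neq\bullet$. I would then substitute $Y=-X$ and $Z=0$. The point of this substitution is that the $Y$-slot of $\decorated$ on the left becomes $X+Y=0$ and the $Z$-slot vanishes, so $\decorated^R(X,0,0)$ enumerates connected graphs without leaves and without siblings that reduce to $R$. Since any such graph is already its own reduction, this species is just $X$ when $R=\bullet$ and $\mathcal{R}$ otherwise. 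The correction terms on the right-hand side simplify to $-X(-X)-\Sets_2(0)=X^2$ in the $\bullet$ case, yielding
\[
X=\comating^\bullet\circ\Patch(X,-X,0)+X^2,\qquad
\mathcal{R}=\comating^R\circ\Patch(X,-X,0),
\]
from which the formulas follow after composing on the right with the compositional inverse of $\Patch(X,-X,0)$.

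The main technical point is therefore to verify that $\Patch(X,-X,0)$ admits a compositional inverse, i.e.\ that its restriction to the empty set is zero and its restriction to singletons is $X$ or $-X$. From
\[
\Patch(X,-X,0)=(1+X)\cdot\Sets\bigl(X(\Sets(-X)-1)\bigr)-1,
\]
the inner species $X(\Sets(-X)-1)$ has no constant or linear part (its lowest-degree contribution is $-X^2$), so $\Sets$ applied to it equals $1$ modulo degree $\geq 2$. Multiplying by $1+X$ and subtracting $1$ leaves $\Patch(X,-X,0)=X+(\text{deg}\geq 2)$, which is in the range where the compositional inverse is guaranteed to exist by the theorem cited from \cite{MR1629341}.

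The main obstacle is really just the bookkeeping in this last verification: one must be comfortable manipulating virtual species and recognize that the minus signs conspire so that $\Sets(-X)-1$ starts in degree one with the correct sign to produce a clean $X$ as the linear part of $\Patch(X,-X,0)$. Once invertibility is established, composing the specialized identities on the right by $\Patch(X,-X,0)^{(-1)}$ yields the stated formulas for $\comating^\bullet$ and $\comating^R$.
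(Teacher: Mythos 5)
Your proposal is correct and follows essentially the same route as the paper: specialize \cref{thm:patch-decomposition-siblings-refined} at $Y=-X$, $Z=0$, note that $\decorated^{R}(X,0,0)$ is $X$ (resp.\ $\mathcal R$) since $R$ is its own reduction, and compose with $\Patch(X,-X,0)^{(-1)}$. The only cosmetic difference is that the paper keeps the sibling bound and observes $\Patch^{\sibling}(X,-X,0)=\Patch(X,-X,0)$ rather than letting $\sibling\to\infty$, and your explicit check that $\Patch(X,-X,0)=X+(\deg\geq 2)$ is a welcome detail the paper leaves implicit.
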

\begin{proof}
  For any connected graph $R$ without siblings and without leaves,
  $\decorated^{R, s}(X,0,0) = \mathcal R$, because the only graph
  without siblings and without leaves reducing to $R$ is $R$ itself.
  Thus, setting $Y=-X$ and $Z=0$ in
  \cref{thm:patch-decomposition-siblings-refined} we obtain
  \[
    X=\comating^\bullet\circ\Patch^\sibling(X,-X,0)+X^2,
  \]
  if $R$ is the single vertex graph, and
  \[
    \mathcal R = \comating^R \circ\Patch^\sibling(X, -X, 0),
  \]
  otherwise.  Since $\Patch^{\sibling}(X, -X, 0) = \Patch(X, -X, 0)$
  this claim follows.
\end{proof}

Recall that $(X-X^2)^{(-1)}$ is the species of ordered rooted trees
with labels on all vertices~\cite[Eq.~(1.1.27)]{MR1629341}.
Rearranging the first identity of \cref{prop:comating} yields the
following.
\begin{corollary}
  The compositional inverse of the virtual species
  \[
    \Patch(X, -X, 0) = (1+X)\cdot\Sets(X\Sets_{\geq 1}(-X)) - 1
  \]
  satisfies
  \[
    \Patch(X, -X, 0)^{(-1)} = (X-X^2)^{(-1)}\circ\comating^\bullet.
  \]
  In particular, it is a proper (that is, non-virtual) species.  More
  precisely, a $\Patch(X, -X, 0)^{(-1)}$-structure is an ordered
  rooted tree of co-mating graphs which reduce to the single vertex
  graph.
\end{corollary}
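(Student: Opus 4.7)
The core of my proof is to directly invert the first identity in \cref{prop:comating}. Before doing so, I would verify that both virtual species $X-X^2$ and $\Patch(X,-X,0)$ admit compositional inverses. For $X-X^2$ this is immediate. For $\Patch(X,-X,0) = (1+X)\cdot\Sets(X\Sets_{\geq 1}(-X)) - 1$, I would expand: since $\Sets_{\geq 1}(-X)$ begins at size~$1$, the product $X\Sets_{\geq 1}(-X)$ begins at size~$2$, so $\Sets(X\Sets_{\geq 1}(-X))$ starts with the constant~$1$ and has no linear part. Multiplying by $(1+X)$ and subtracting~$1$ yields a virtual species whose restriction to the empty set is~$0$ and whose restriction to sets of size~$1$ is~$X$, which is precisely the condition in the theorem quoted from \cite{MR1629341} for the compositional inverse to exist.

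With those preliminaries settled, I would take the first identity of \cref{prop:comating}, namely
\[
  \comating^\bullet = (X-X^2)\circ\Patch(X,-X,0)^{(-1)},
\]
and compose both sides on the left with $(X-X^2)^{(-1)}$. Associativity of plethysm and the cancellation $(X-X^2)^{(-1)}\circ(X-X^2)=X$ yield the asserted formula
\[
  \Patch(X,-X,0)^{(-1)} = (X-X^2)^{(-1)}\circ\comating^\bullet.
\]

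For the remaining claims, I would argue as follows. The species $(X-X^2)^{(-1)}$ is the species of ordered rooted trees with labels on all vertices, which is a proper species. The species $\comating^\bullet$ is a subspecies of $\comating$ and is therefore proper; moreover, a co-mating graph has at least one vertex, so $\comating^\bullet$ has no constant term, which guarantees that the plethystic composition $(X-X^2)^{(-1)}\circ\comating^\bullet$ is well-defined as an ordinary (non-virtual) species. This immediately gives the propriety claim. For the combinatorial interpretation, I would invoke the standard interpretation of plethystic composition of proper species: a structure on a label set~$U$ is obtained by partitioning~$U$, placing a $\comating^\bullet$-structure on each block, and then endowing the set of blocks with an ordered-rooted-tree structure. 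This is exactly an ordered rooted tree whose vertices carry co-mating graphs reducing to the single vertex graph.

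The argument is short because all the real work has already been done: the nontrivial input is \cref{prop:comating}, itself derived from the patch decomposition. The only step requiring any care is the verification of invertibility for $\Patch(X,-X,0)$, since it is defined as a virtual species and the relevant degree-$1$ term could in principle cancel; I would expect this to be the main (if modest) obstacle, and it is resolved by the explicit expansion above.
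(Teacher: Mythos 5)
Your proof is correct and follows essentially the same route as the paper, which simply states that the identity is obtained by ``rearranging the first identity of \cref{prop:comating}'', i.e.\ composing on the left with $(X-X^2)^{(-1)}$ and reading off the combinatorial interpretation of the resulting plethysm of proper species. Your explicit verification that $\Patch(X,-X,0)$ satisfies the hypotheses for compositional invertibility (restriction to the empty set equal to $0$ and to singletons equal to $X$) is a point the paper leaves implicit, and it is carried out correctly.
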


Using these identities and the \SageMath-package developed by Mainak
Roy and the third author it is easy to compute the first few terms of
the molecular decompositions and isomorphism type generating series
of the species $\comating^\bullet$ and $\Patch(X, -X, 0)^{(-1)}$.

The molecular decomposition of the species $\comating^\bullet$ begins
with
\begin{align*}
  X & + X \Sets_2 + X \Sets_3+\Sets_2(X^2) + X \Sets_4+2 X^3 \Sets_2+2 X \Sets_2(X^2) \\
    & + X \Sets_5+\Sets_2(X \Sets_2)+X^2 \Sets_2^2+\Sets_2 \Sets_2(X^2)+3 X^4 \Sets_2 \\
    & \pushright{+\,\Sets_2(X^3)+\Sets_3(X^2)+X^2 \Sets_2(X^2)+2 X^3 \Sets_3+2 X^6}\\
    & + \dots
\end{align*}

The corresponding isomorphism type generating series begins with
\[
  x + x^3 + 2 x^4 + 5 x^5 + 14 x^6 + 43 x^7 + 141 x^8 + 491 x^9 + 1778 x^{10} + \dots
\]

The molecular decomposition of $\Patch(X, -X, 0)^{(-1)}$ begins with
\begin{align*}
  X &+ X^2 + X \Sets_2+2 X^3 %
      + X \Sets_3+2 X^2 \Sets_2+\Sets_2(X^2)+5 X^4 \\
    &+ X \Sets_4+2 X^2 \Sets_3+8 X^3 \Sets_2+4 X \Sets_2(X^2)+14 X^5 \\
    &+ X \Sets_5+2 X^2 \Sets_4+\Sets_2(X \Sets_2)+2 X^2 \Sets_2^2+8 X^3 \Sets_3+\Sets_2 \Sets_2(X^2)\\
    & \pushright{+\,27 X^4 \Sets_2+\Sets_2(X^3)+\Sets_3(X^2)+11 X^2 \Sets_2(X^2)+44 X^6}\\
  &+ \dots
\end{align*}

The corresponding isomorphism type generating series begins with
\[
  x + x^2 + 3 x^3 + 9 x^4 + 29 x^5 + 99 x^6 + 353 x^7 + 1300 x^8 + 4913 x^9 + 18945 x^{10} + \dots
\]

The isomorphism types of the species $\decorated(X,0,0)$, the species
of graphs having no siblings or leaves, are precisely the graphs $R$
that arise as the reduction of some graph.  The molecular
decomposition of this species seems to be significantly harder to
compute.  A naive approach, enumerating all graphs, yields the
following.
\begin{align*}
  X &+ P_4 + \Sets_2 \Sets_3+P_5+X P_4+2 X \Sets_2(X^2)\\
    &+\Sets_2 \Sets_4+\Sets_3(\Sets_2)+3 \Sets_2 \Sets_2(X^2)+\Sets_2(\Sets_3)+4 X^4 \Sets_2
      +2 P_6+\Sets_2(X \Sets_2)\\
    & +2 P^{bic}_6+\Sets_3(X^2)+X \Sets_2 \Sets_3 +X^2 \Sets_2^2+X P_5+8 X^2 \Sets_2(X^2)+2 \Sets_2(X^3)\\
    & +2 X^6 + \dots
\end{align*}
Here, $P_n$ denotes the species of polygons of size $n$,
corresponding to a graph whose automorphism group is the dihedral
group of order $n$.  Moreover, for even $n$, the species of polygons
of order $n$ whose edges are alternately colored with two colors is
denoted by $P_n^{bic}$, see \cite[Exc.~2.6.28]{MR1629341}.

The corresponding isomorphism type generating series begins with
\[
  x + x^4 + 5 x^5 + 31 x^6 + 293 x^7 + 4986 x^8 + 151096 x^9 + 8264613 x^{10} + \dots
\]
and is \OEIS{A129586}. \Cref{fig:Graphs5} shows the graphs on five
vertices having neither siblings nor leaves corresponding to the five terms of size $5$ in the molecular decomposition of $\decorated(X,0,0)$.

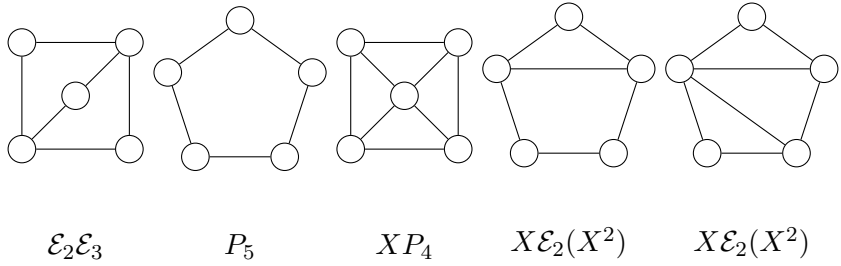
\begin{figure}
  \centering
  \begin{tikzpicture}
    \node[circle, style=draw] (V1) at (0,0) {};
    \node[circle, style=draw] (V2) at (1.41421356237310,0) {};
    \node[circle, style=draw] (V3) at (1.41421356237310,1.41421356237310) {};
    \node[circle, style=draw] (V4) at (0,1.41421356237310) {};
    \node[circle, style=draw] (V5) at (0.707106781186548,0.707106781186548) {};
    \node at (0.707106781186548,-1.292893218813452) {$\mathcal{E}_2\mathcal{E}_3$};
    \draw (V1)--(V2)--(V3)--(V4)--(V1);
    \draw (V1)--(V5)--(V3);
  \end{tikzpicture}
  \begin{tikzpicture}
    \node[circle, style=draw] (V1) at (-0.587785252292473, -0.809016994374947) {};
    \node[circle, style=draw] (V2) at (0.587785252292473,  -0.809016994374947) {};
    \node[circle, style=draw] (V3) at (0.951056516295154,   0.309016994374947) {};
    \node[circle, style=draw] (V4) at (0.000000000000000,   1.00000000000000)  {};
    \node[circle, style=draw] (V5) at (-0.951056516295154,  0.309016994374947) {};
    \node at (0,-2) {$P_5$};
    \draw (V1)--(V2)--(V3)--(V4)--(V5)--(V1);
  \end{tikzpicture}
  \begin{tikzpicture}
    \node[circle, style=draw] (V1) at (0,0) {};
    \node[circle, style=draw] (V2) at (1.41421356237310,0) {};
    \node[circle, style=draw] (V3) at (1.41421356237310,1.41421356237310) {};
    \node[circle, style=draw] (V4) at (0,1.41421356237310) {};
    \node[circle, style=draw] (V5) at (0.707106781186548,0.707106781186548) {};
    \node at (0.707106781186548,-1.292893218813452) {$XP_4$};
    \draw (V1)--(V2)--(V3)--(V4)--(V1)--(V5)--(V3);
    \draw (V2)--(V5)--(V4);
  \end{tikzpicture}
  \begin{tikzpicture}
    \node[circle, style=draw] (V1) at (-0.587785252292473, -0.809016994374947) {};
    \node[circle, style=draw] (V2) at (0.587785252292473,  -0.809016994374947) {};
    \node[circle, style=draw] (V3) at (0.951056516295154,   0.309016994374947) {};
    \node[circle, style=draw] (V4) at (0.000000000000000,   1.00000000000000)  {};
    \node[circle, style=draw] (V5) at (-0.951056516295154,  0.309016994374947) {};
    \node at (0,-2) {$X\mathcal{E}_2(X^2)$};
    \draw (V1)--(V2)--(V3)--(V4)--(V5)--(V1);
    \draw (V5)--(V3);
  \end{tikzpicture}
  \begin{tikzpicture}
    \node[circle, style=draw] (V1) at (-0.587785252292473, -0.809016994374947) {};
    \node[circle, style=draw] (V2) at (0.587785252292473,  -0.809016994374947) {};
    \node[circle, style=draw] (V3) at (0.951056516295154,   0.309016994374947) {};
    \node[circle, style=draw] (V4) at (0.000000000000000,   1.00000000000000)  {};
    \node[circle, style=draw] (V5) at (-0.951056516295154,  0.309016994374947) {};
    \node at (0,-2) {$X\mathcal{E}_2(X^2)$};
    \draw (V1)--(V2)--(V3)--(V4)--(V5)--(V1);
    \draw (V5)--(V3);
    \draw (V5)--(V2);
  \end{tikzpicture}
  \caption{All connected graphs with $5$ vertices having neither
    siblings nor leaves and their corresponding term in the molecular decomposition of $\decorated(X,0,0)$.}
  \label{fig:Graphs5}
\end{figure}

\section{Towards a Bijection}\label{Section Towards a Bijection}
To prove \cref{thm:main} and \cref{thm:R} bijectively, one might hope
to construct an involution on the set of graphs with a given number
of vertices, which interchanges groups of siblings with tufts and
preserves the reduction.  In this section we will discuss this idea
and its pitfalls.

To a graph $G$ other than $K_2$ we associate the \Dfn{decorated
  graph} $G_\dec$ as follows:
\begin{itemize}
\item we tag each vertex of $G$ that is adjacent to $k$ leaves with $T_k$, and remove all leaves from $G$,
\item we replace each maximal group of siblings of size $k+1$ of $G$
  by a single vertex, which we tag with $S_k$.
\end{itemize}

An example for this process is displayed in \Cref{fig:Figure
  decoration of graphs}. Since a vertex adjacent to a leaf cannot
have siblings, $G_\dec$ is well-defined and each of its vertices has
at most one tag.  The sibling number $\sibling(G)$ is the maximal
number $s$ such that $S_s$ appears as a tag, and the tuft number
$\tuft(G)$ is the maximal number $t$, such that $T_t$ appears as a
tag.
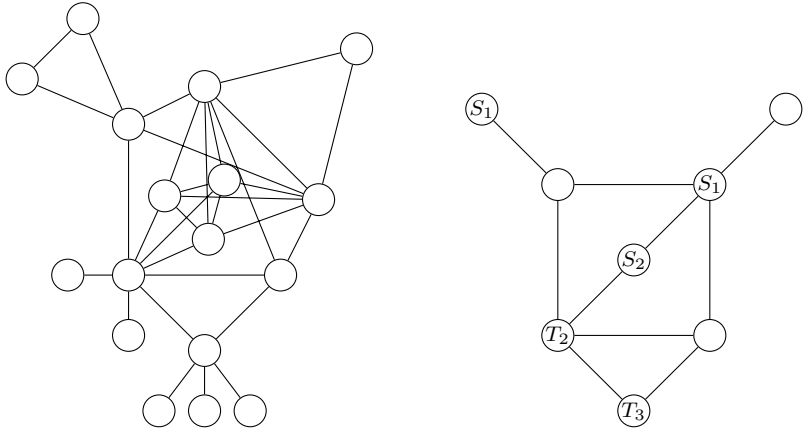
\begin{figure}[ht]
  \begin{tikzpicture}
    \node[vertex] (V10) at (-0.525,0.05) {};
    \node[vertex] (V11) at (0.05,-0.525) {};
    \node[vertex] (V12) at (0.26,0.26) {};

    \node[vertex] (V21) at (1.5,0) {};
    \node[vertex] (V22) at (0,1.5) {};

    \node[vertex] (V3) at (1,-1) {};
    \node[vertex] (V4) at (-1,1) {};

    \node[vertex] (V5) at (-1,-1) {};
    \node[vertex] (V51) at (-1,-1.8) {};
    \node[vertex] (V52) at (-1.8,-1) {};

    \node[vertex] (V6) at (2,2) {};

    \node[vertex] (V7) at (0,-2) {};
    \node[vertex] (V71) at (0,-2.8) {};
    \node[vertex] (V72) at (-0.6,-2.8) {};
    \node[vertex] (V73) at (0.6,-2.8) {};

    \node[vertex] (V81) at (-2.4,1.6) {};
    \node[vertex] (V82) at (-1.6,2.4) {};

    \draw (V10)--(V11)--(V12)--(V10) (V10)--(V21)--(V3) (V10)--(V22)--(V3) (V4)--(V5)--(V10) (V11)--(V21) (V21)--(V22) (V11)--(V22) (V5)--(V11) (V12)--(V21) (V12)--(V22) (V5)--(V12) (V3)--(V5) (V21)--(V4) (V21)--(V6) (V22)--(V4) (V22)--(V6) (V3)--(V7) (V4)--(V81) (V4)--(V82) (V81)--(V82) (V7)--(V5) (V51)--(V5)--(V52) (V71)--(V7)--(V72) (V73)--(V7);
  \end{tikzpicture}
  \hfil
  \begin{tikzpicture}
    \node[vertex] (V1) at (0,0) {$\scriptstyle S_2$};
    \node[vertex] (V2) at (1,1) {$\scriptstyle S_1$};
    \node[vertex] (V3) at (1,-1) {};
    \node[vertex] (V4) at (-1,1) {};
    \node[vertex] (V5) at (-1,-1) {$\scriptstyle T_2$};

    \node[vertex] (V6) at (2,2) {};
    \node[vertex] (V7) at (0,-2) {$\scriptstyle T_3$};
    \node[vertex] (V8) at (-2,2) {$\scriptstyle S_1$};

    \draw (V1)--(V2)--(V3) (V4)--(V5)--(V1) (V3)--(V5) (V2)--(V4) (V2)--(V6) (V3)--(V7) (V4)--(V8) (V7)--(V5);
  \end{tikzpicture}
  \caption{A graph and its associated decorated graph.}
  \centering
  \label{fig:Figure decoration of graphs}
\end{figure}

Let a \Dfn{tagged graph} be a graph each of whose vertices may be
untagged, have a tag $T_k$ or a tag $S_k$ for some positive
integer~$k$.  We can, in fact, characterize the tagged graphs that
occur as decorated graphs.
\begin{proposition}\label{lem:decorated}
  Given a graph $G\neq K_2$, its decorated graph $G_\dec$ satisfies
  the following conditions:
  \begin{enumerate}
  \item If two vertices of $G_\dec$ are siblings, at least one of
    them is tagged $T_k$, for some~$k$.
  \item Every untagged leaf is a neighbor of a vertex with
    tag~$S_k$, for some~$k$.
  \end{enumerate}
  Conversely, any tagged graph satisfying these conditions is the
  decorated graph of a graph.
\end{proposition}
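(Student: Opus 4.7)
The plan is to establish the two implications separately by case analysis on the three kinds of tags.

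For the forward direction, let $G\neq K_2$. To prove condition~(2), I would observe that an untagged leaf $v$ of $G_\dec$ is an original vertex of $G$ with neither adjacent leaves nor siblings, hence survives both construction steps unchanged and has degree at least two in $G$. Since its degree can only decrease when sibling groups are contracted, all of $v$'s neighbors in $G$ must lie in a single maximal sibling group, whose contraction becomes the unique $S_k$-tagged neighbor of $v$. For condition~(1), I would argue contrapositively: if $u$ and $v$ are siblings in $G_\dec$ with neither carrying a $T$-tag, let $U$ and $V$ denote their preimages in $G$ (singletons if untagged, maximal sibling groups if $S$-tagged). A short check shows that the external neighborhood of a maximal sibling group is itself a union of whole maximal sibling groups, so the contraction map is bijective between such unions and their $G_\dec$-images. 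The equality of closed neighborhoods in $G_\dec$ therefore lifts to $\mathrm{Ext}(U)\setminus V=\mathrm{Ext}(V)\setminus U$ in $G$, which together with $V\subseteq \mathrm{Ext}(U)$ and $U\subseteq \mathrm{Ext}(V)$ yields $\bar N(u')=\bar N(v')$ in $G$ for every $u'\in U$ and $v'\in V$. Hence $U\cup V$ is a sibling group in $G$, contradicting the maximality of $U$.

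For the converse, given a tagged graph $H$ satisfying~(1) and~(2), I would construct $G$ by attaching $k$ fresh pendant leaves to each $T_k$-tagged vertex and replacing each $S_k$-tagged vertex by a clique on $k+1$ new vertices inheriting the original adjacencies; the tags are then erased. Aside from the degenerate one-vertex cases $H=T_1$ and $H=S_1$, both of which expand to the excluded graph $K_2$, the remaining task is to verify two structural claims about $G$: its leaves are precisely the newly attached pendants, and its maximal sibling groups are precisely the expanded cliques. Once these are in place, the decoration of $G$ visibly returns the tag data and underlying graph of $H$.

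The first claim uses condition~(2) directly: any untagged vertex of $H$ with $H$-degree one has its unique neighbor $S$-tagged and hence gains at least two neighbors in $G$, while every other vertex of $G$ acquires at least two neighbors by construction. The second claim uses condition~(1): two vertices of $G$ taken from distinct expansions that happen to be siblings in $G$ would induce a siblingship in $H$ between two non-$T$-tagged vertices, contradicting~(1). I expect this last transfer of siblingship between $G$ and $H$ to be the main obstacle, since it requires careful bookkeeping of external neighborhoods: a distinguishing $H$-vertex between two non-siblings of $H$ must still distinguish their expansions in $G$, which one shows by checking that the $S_k$-expansion preserves external adjacencies vertex by vertex.
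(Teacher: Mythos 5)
Your proposal is correct and follows essentially the same route as the paper's own proof: a direct structural verification of both conditions (your condition~(1) argument lifts a siblingship of $G_\dec$ to one of $G$ and contradicts maximality, whereas the paper pushes a distinguishing vertex down to $G_\dec$ --- the same argument in the other contrapositive direction), together with the explicit tag-expansion construction for the converse. You in fact supply more detail than the paper, which dispatches the converse in a single sentence, and you rightly flag the degenerate one-vertex tagged graphs $T_1$ and $S_1$ that expand to the excluded $K_2$, an edge case the paper passes over silently.
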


\begin{proof}
  To show that the first condition holds, let~$u$ and~$v$ be two
  distinct vertices of $G_\dec$, such that neither of them has a tag
  $T_k$ for any~$k$.  By construction, the preimages of~$u$ and~$v$
  in~$G$ cannot have the same closed neighborhood.  Let~$w$ be any
  vertex of~$G$ which is, without loss of generality, in the
  neighborhood of the preimage of~$u$ but not in the neighborhood
  of the preimage of of~$v$.  Then, either is a leaf attached to~$u$,
  or its image in $G_\dec$ is in the neighborhood of~$u$ but not in
  the neighborhood of~$v$.

  For the second condition, let~$u$ be an untagged vertex in
  $G_\dec$. Then, by construction, its preimage cannot be a leaf
  of~$G$.  Thus, if $u$ is a leaf in $G_\dec$, its preimage must have
  neighbors in~$G$ which are siblings.

  Conversely, if both of the conditions are satisfied, first replacing the tags by their corresponding structures and then constructing its decorated graph, results in the original graph.
\end{proof}

One might hope that changing each tag $S_n$ to a tag $T_n$ and vice
versa provides a bijection interchanging the sibling and the tuft
number.  However, the validity of the conditions given in
\cref{lem:decorated} is not preserved by this operation.  For
example, the graph on the left in \Cref{fig:counterexample ideal bij}
is a valid decorated graph, whereas the graph on the right violates
the second condition.
\begin{figure}[ht]
  \centering
  \begin{tabular}{c@{\quad$\leftrightarrow$\quad}c}
    \begin{tikzpicture}[baseline={(current bounding box.center)}]
      \node[vertex] (V1) at (0,1) {};
      \node[vertex] (V2) at (1,0) {$\scriptstyle S_n$};
      \node[vertex] (V3) at (2,1) {};
      \draw (V1)--(V2)--(V3);
    \end{tikzpicture}
    &
      \begin{tikzpicture}[baseline={(current bounding box.center)}]
        \node[vertex] (V1) at (0,1) {};
        \node[vertex] (V2) at (1,0) {$\scriptstyle T_n$};
        \node[vertex] (V3) at (2,1) {};
        \draw (V1)--(V2)--(V3);
      \end{tikzpicture}
  \end{tabular}
  \caption{A tagged graph satisfying the conditions of
    \cref{lem:decorated} on the left and an invalid tagged graph on the
    right.}
  \label{fig:counterexample ideal bij}
\end{figure}
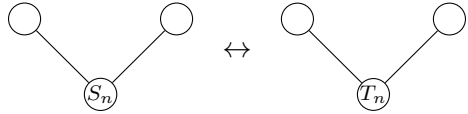

Because of \cref{thm:R} we want to find an involution $\iota$ that
interchanges the sibling and the tuft number, and preserves the
number of vertices and the reduction of the graph.

Additionally, we may require that $\iota$ replaces any tag $S_k$ with
a tag $T_k$ and conversely, on the corresponding decorated graph,
whenever the result is a valid decorated graph.  Under these
assumptions, \Cref{fig:bijection} displays pairs of sets of decorated
graphs that are necessarily in bijection.  Note that these examples
demonstrate that a bijection cannot preserve the number of tags.
\begin{figure}[h]
  \centering
  \renewcommand{\arraystretch}{3}
  \begin{tabular}{c@{\quad$\leftrightarrow$\quad}c}
    \begin{tikzpicture}[scale=0.8, baseline={(current bounding box.center)}]
      \node[vertex] (V1) at (0,1) {};
      \node[vertex] (V2) at (1,0) {$\scriptstyle S_n$};
      \node[vertex] (V3) at (2,1) {};
      \draw (V1)--(V2)--(V3);
    \end{tikzpicture}
    &
    \begin{tikzpicture}[scale=0.8, baseline={(current bounding box.center)}]
      \node[vertex] (V1) at (0,0) {$\scriptstyle T_n$};
      \node[vertex] (V2) at (1,0) {$\scriptstyle T_1$};
      \draw (V1)--(V2);
    \end{tikzpicture}
    \\
    \begin{tikzpicture}[scale=0.8, baseline={(current bounding box.center)}]
      \node[vertex] (V1) at (0,1) {$\scriptstyle T_k$};
      \node[vertex] (V2) at (1,0) {$\scriptstyle S_n$};
      \node[vertex] (V3) at (2,1) {};
      \draw (V1)--(V2)--(V3);
    \end{tikzpicture}
    &
    \begin{tikzpicture}[scale=0.8, baseline={(current bounding box.center)}]
      \node[vertex] (V1) at (0,1) {$\scriptstyle T_n$};
      \node[vertex] (V2) at (1,0) {$\scriptstyle S_k$};
      \node[vertex] (V3) at (2,1) {};
      \draw (V1)--(V2)--(V3);
    \end{tikzpicture}
    \\
    \begin{tikzpicture}[scale=0.8, baseline={(current bounding box.center)}]
      \node[vertex] (V1) at (1,0) {$\scriptstyle S_1$};
      \node[vertex] (V2) at (0,1) {};
      \node[vertex] (V3) at (1,1) {};
      \node[vertex] (V4) at (2,1) {};
      \draw (V1)--(V2);
      \draw (V1)--(V3);
      \draw (V1)--(V4);
    \end{tikzpicture}
    \;
    \begin{tikzpicture}[scale=0.8, baseline={(current bounding box.center)}]
      \node[vertex] (V1) at (0,0) {$\scriptstyle S_1$};
      \node[vertex] (V2) at (1,0) {$\scriptstyle S_1$};
      \node[vertex] (V3) at (1,1) {};
      \draw (V1)--(V2)--(V3);
    \end{tikzpicture}
    &
    \begin{tikzpicture}[scale=0.8, baseline={(current bounding box.center)}]
      \node[vertex] (V1) at (0,0) {$\scriptstyle T_1$};
      \node[vertex] (V2) at (1,0) {};
      \node[vertex] (V3) at (0,1) {};
      \node[vertex] (V4) at (1,1) {};
      \draw (V1)--(V2);
      \draw (V1)--(V3);
      \draw (V1)--(V4);
      \draw (V2)--(V3);
      \draw (V2)--(V4);
    \end{tikzpicture}
    \;
    \begin{tikzpicture}[scale=0.8, baseline={(current bounding box.center)}]
      \node[vertex] (V1) at (0,0) {$\scriptstyle T_1$};
      \node[vertex] (V2) at (1,0) {$\scriptstyle T_1$};
      \node[vertex] (V3) at (0.5,1) {};
      \draw (V1)--(V2)--(V3)--(V1);
    \end{tikzpicture}
    \\
    \begin{tikzpicture}[scale=0.8, baseline={(current bounding box.center)}]
      \node[vertex] (V1) at (1,0) {$\scriptstyle S_n$};
      \node[vertex] (V2) at (0,1) {};
      \node[vertex] (V3) at (1,1) {};
      \node[vertex] (V4) at (2,1) {};
      \draw (V1)--(V2);
      \draw (V1)--(V3);
      \draw (V1)--(V4);
    \end{tikzpicture}
    \;
    \begin{tikzpicture}[scale=0.8, baseline={(current bounding box.center)}]
      \node[vertex] (V1) at (0,0) {$\scriptstyle S_1$};
      \node[vertex] (V2) at (1,0) {$\scriptstyle S_n$};
      \node[vertex] (V3) at (1,1) {};
      \draw (V1)--(V2)--(V3);
    \end{tikzpicture}
    \;
    \begin{tikzpicture}[scale=0.8, baseline={(current bounding box.center)}]
      \node[vertex] (V1) at (0,0) {$\scriptstyle S_n$};
      \node[vertex] (V2) at (1,0) {$\scriptstyle S_1$};
      \node[vertex] (V3) at (1,1) {};
      \draw (V1)--(V2)--(V3);
    \end{tikzpicture}
    &
    \begin{tikzpicture}[scale=0.8, baseline={(current bounding box.center)}]
      \node[vertex] (V1) at (0,0) {$\scriptstyle T_n$};
      \node[vertex] (V2) at (1,0) {};
      \node[vertex] (V3) at (0,1) {};
      \node[vertex] (V4) at (1,1) {};
      \draw (V1)--(V2);
      \draw (V1)--(V3);
      \draw (V1)--(V4);
      \draw (V2)--(V3);
      \draw (V2)--(V4);
    \end{tikzpicture}
      \;
    \begin{tikzpicture}[scale=0.8, baseline={(current bounding box.center)}]
      \node[vertex] (V1) at (0,0) {$\scriptstyle T_n$};
      \node[vertex] (V2) at (1,0) {$\scriptstyle T_1$};
      \node[vertex] (V3) at (0.5,1) {};
      \draw (V1)--(V2)--(V3)--(V1);
    \end{tikzpicture}
      \;
    \begin{tikzpicture}[scale=0.8, baseline={(current bounding box.center)}]
      \node[vertex] (V1) at (0,0) {$\scriptstyle T_n$};
      \node[vertex] (V2) at (1,0) {$\scriptstyle T_2$};
      \draw (V1)--(V2);
    \end{tikzpicture}
  \end{tabular}
  \caption{Some sets of graphs that reduce to a single vertex
    necessarily corresponding to each other by \cref{thm:R}.}
  \label{fig:bijection}
\end{figure}
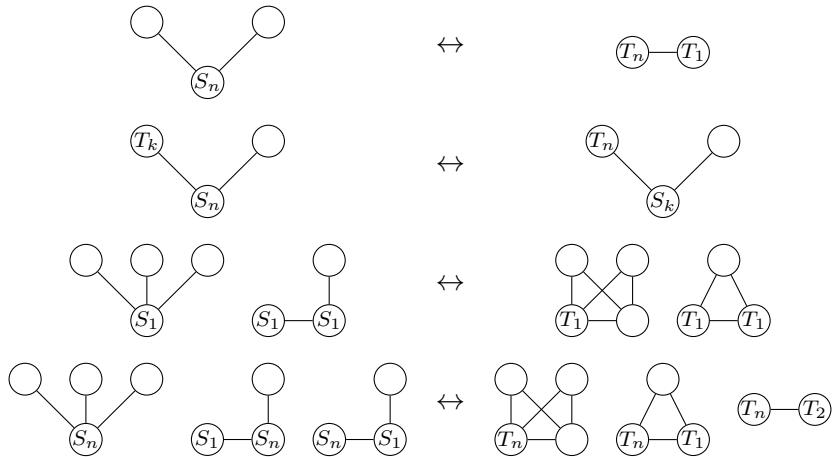

\printbibliography
\end{document}